\newtheorem{theorem}{Theorem}[section]
\newtheorem{proposition}[theorem]{Proposition}
\newtheorem{definition}[theorem]{Definition}
\newtheorem{lemma}[theorem]{Lemma}
\newtheorem{corollary}[theorem]{Corollary}
\newtheorem{Remark}[theorem]{Remark}
\numberwithin{equation}{section}
\numberwithin{theorem}{section}
\newcommand{\bb}[1]{{\mathbb #1}}
\newcommand{\chempote}{e_\eps}
\newcommand{\chempotek}{e_{\eps_k}}
\newcommand{\eps}{\varepsilon}
\newcommand{\epss}{\varepsilon^2}
\newcommand{\epseps}{\eta}
\newcommand{\F} {{F^{**}}}
\newcommand{\Fe} {{F_\eps}}
\newcommand{\Fek} {F_{\eps_k}}
\newcommand{\grad} {\nabla}
\newcommand{\gradflowu} {u}
\newcommand{\gradflowue} {{u_\eps}}
\newcommand{\gradflowuek} {{u_{\eps_k}}}
\newcommand{\gus}{\Sigma_G}
\newcommand{\guse}{\Sigma_G^{\epseps}}
\newcommand{\gusrho}{\Sigma_G^{\rho}}
\newcommand{\hwe}{{\widehat w_\eps}}
\newcommand{\inidatu}{\overline u}
\newcommand{\inidatue}{\overline u_\eps}
\newcommand{\lus}{\Sigma_L}
\newcommand{\mc}[1]{{\mathcal #1}}
\newcommand{\nada}[1]{}
\newcommand{\ovgus}{\overline\Sigma_G}
\newcommand{\R}{\mathbb R}
\newcommand{\some}{w}
\newcommand{\staticu}{\ustatic}
\newcommand{\spazio}{{\mathcal H^{-1}_m}(\toro)}
\newcommand{\toro}{\mathbb T}
\newcommand{\ue}{u_\eps}
\newcommand{\uek}{u_{\eps_k}}
\newcommand{\uestatic}{\ve}
\newcommand{\uekstatic}{\vek}
\newcommand{\ustatic}{v}
\newcommand{\var}{\xi}
\newcommand{\ve}{v_\eps}
\newcommand{\vek}{v_{\eps_k}}
\newcommand{\we}{w_\eps}
\newcommand{\xe}{x_\eps}
\newcommand{\ye}{y_\eps}
\newfont{\indic}{bbmss12}
\definecolor{light}{gray}{.97}
\title{
Convergence of the one-dimensional Cahn-Hilliard equation}
\author[G.\ Bellettini]{Giovanni Bellettini}
\address{Dipartimento di Matematica, Universit\`a di Roma Tor
Vergata, via della Ricerca Scientifica 1, 00133 Roma, Italy.}
\email{belletti@mat.uniroma2.it}
\author[L.\ Bertini]{Lorenzo Bertini}
\address{Dipartimento di Matematica, Universit\`a di Roma La Sapienza,
Piazzale Aldo Moro 2, 00185 Roma, Italy.}
\email{bertini@mat.uniroma1.it}
\author[M.\ Mariani]{Mauro Mariani}
\address{Laboratoire d'Analyse,
Topologie, Probabilit\'es (CNRS UMR 6632), Universit\'e 
Aix-Marseille, Avenue Escadrille Normandie-Niemen, 13397 Marseille
Cedex 20, France.}
\email{mariani@cmi.univ-mrs.fr}
\author[M.\ Novaga]{Matteo Novaga}
\address{Dipartimento di Matematica, Universit\`a di Padova,
via Trieste 63, 35121 Padova, Italy.}
\email{novaga@math.unipd.it}
\date{}
\begin{document}
\begin{abstract}
We consider the Cahn-Hilliard equation in one space dimension with scaling parameter $\eps$, 
i.e. $u_t  = (W'(u)-\epss u_{xx})_{xx}$, where $W$ is a nonconvex potential.
In the limit $\eps\downarrow 0$, under the assumption that the initial data are energetically well-prepared,
we show the convergence to a Stefan problem. The proof is based on variational methods and exploits 
the gradient flow structure of the Cahn-Hilliard equation.
\end{abstract}

\maketitle

\thispagestyle{empty}

\section{Introduction}
In this paper we are interested in the convergence of solutions $\gradflowue=\gradflowue(\cdot,\cdot,\inidatu_\eps)$
to the equation 
\begin{equation}\label{eq:chgf}
\begin{cases}
{\rm u}_t  = \big(W'({\rm u})-
\epss {\rm u}_{xx}\big)_{xx} & \text{in $(0,+\infty) \times \bb T$}
\\
{\rm u} = \inidatu_\eps & \text{on $\{0\}\times \toro$}
\end{cases}
\end{equation}
as $\eps \downarrow 0$, where $\toro:=\bb R/\bb Z$ is 
the one-dimensional torus.
Here  $\eps$ is a spatial scale parameter and
$W$ is a rather general smooth potential.
Our analysis covers, in particular,
the choice of the double-well potential
\begin{equation}\label{pdw}
W(\var) = \frac{(1-\var^2)^2}{4}  \qquad \var \in \R,
\end{equation}
corresponding to the Cahn-Hilliard equation.
We refer for instance to \cite{CH, GL}
for the physical motivations leading
to equation \eqref{eq:chgf}, in relation 
with the theory of phase transitions, 
and to \cite{Sl:91,BaPiXu:95b,Ch:96}
for some mathematical results and connections
with the Stefan problem \cite{Me:92}.

Equation \eqref{eq:chgf} can be seen as the gradient
flow, in the $H^{-1}$-topology, of the Allen-Cahn type functional
\begin{equation}\label{aobwsimagnatroppo}
\Fe(\ustatic)= \int_{\toro}\left(
\epss\frac{\ustatic_x^2}{2} +W(\ustatic)\right) dx, 
\end{equation}
where
the scalar field $\ustatic$ 
represents the local order parameter. The gradient flow structure 
of \eqref{eq:chgf} allows us to look at the convergence 
of the functions $\gradflowue$ 
in a purely variational way, at least under the assumption
of energetically well-prepared initial data.

The main difficulty in studying
the limit of $\gradflowue$ 
is due to the fact that, when the function $W$ is nonconvex,
\eqref{eq:chgf} is forward-backward
parabolic for $\eps=0$. 
Looking at equation \eqref{eq:chgf}, it is rather natural to
expect a limit equation related to the $H^{-1}$-gradient flow of the functional
\begin{equation}\label{aobwsimagnamale}
F(v)=\int_{\toro} 
W(\staticu)~dx.
\end{equation}
%obtained letting formally $\eps$ to zero.  
However, when $W$ is nonconvex, the functional $F$ is not convex 
and not lower semicontinuous 
with respect to the $H^{-1}$-topology, and the gradient flow dynamics is not well-posed. 
The lower semicontinuous envelope of $F$ is given by 
\begin{equation*}
\F(\staticu)=\int_{\toro} W^{**}(\staticu)~dx,
\end{equation*}
where $W^{**}$ denotes the convex envelope of $W$. 
It is not difficult to prove (see Proposition \ref{l:mm}) 
that $\F$ is the $\Gamma$-limit %(see \cite{DM}) 
of the functionals $\Fe$ as $\eps\downarrow 0$,
with respect to the $H^{-1}$-topology.

In this paper we prove
that the solutions $\gradflowue$ to \eqref{eq:chgf} converge
to the gradient flow of $\F$, as $\eps\downarrow 0$, under a suitable assumption on the 
initial data $\inidatu_\eps$.
Our main result can be informally stated as follows
(see Theorem \ref{teo:convergence} for the precise statement).
%\begin{theorem}\label{teo:convergenceinformal}
Let $\inidatu$ be such that $\F(\inidatu) < +\infty$, take a sequence
$(\inidatue)$ of initial data satisfying
$\Fe(\inidatue) < +\infty$, converging to $\inidatu$ in $H^{-1}(\toro)$
such that 
$$
\int_\toro \inidatue~dx = \int_\toro \inidatu~dx,  
$$
and
\begin{equation}\label{aia}
\lim_{\eps \downarrow 0} \Fe(\inidatue) = \F(\inidatu).
\end{equation}
Then the solution
$\gradflowue(\cdot,\cdot,\inidatue)$ of \eqref{eq:chgf} converges to 
the $H^{-1}$-gradient flow of $\F$, namely to the solution $\gradflowu$ of
\begin{equation}
\label{eq:hgfbis}
\begin{cases}
\partial_t u= \big(W^{**~\!\prime}(u)\big)_{xx}
& \text{in $(0,+\infty)\times \bb T$}
\\
u= \inidatu & \text{on $\{0\}\times \bb  T$},
\end{cases}
\end{equation}
which, for $W$ nonconvex, is the weak formulation of the Stefan problem \cite{Me:92}.

Some comments concerning  hypothesis
\eqref{aia} are in order, related to the so-called
wrinkling phenomenon.
Given $\inidatu\in H^{-1}(\toro)$, define
\begin{equation}\label{sugus}
\gus := \{\var \in \R : W(\var) > W^{**}(\var)\}, \qquad \lus := 
\{\var \in \R : W''(\var) < 0\},
\end{equation}
and
$$
\gus(\inidatu) := \{x \in \toro : \inidatu(x) \in \gus\}, 
\qquad \lus(\inidatu) := \{x \in \toro: \inidatu(x) \in \lus\}.
$$
We call $\gus(\inidatu)$ the global unstable set of $\inidatu$,
and $\lus(\inidatu)$ the local unstable set of $\inidatu$.
Numerical simulations performed 
in \cite{BeFuGu:05} 
(see also \cite{FiGoPa:98}) 
show a quick formation 
of oscillations
and these microstructures
seem to generically appear only  
in $\lus(\inidatu)$, instead that on the whole of $\gus(\inidatu)$.
In addition, superimposing on $\inidatu$ a microstructure 
in a region $\Sigma \subseteq \gus(\inidatu) \setminus \lus(\inidatu)$
leads to a numerical solution which seems to depend on the choice of $\Sigma$. 
These simulations show an instability of solutions
$\gradflowue(\cdot,\cdot,\inidatu)$ with respect to $\inidatu$.
In particular, if we take 
two sequences $(\widetilde u_\eps)$, $(\widehat u_\eps)$
of initial data both approximating $\inidatu$ and corresponding to 
two different choices of $\Sigma$, 
in general one may expect that
%for the corresponding solutions of the equation in \eqref{eq:chgf}, that
%
$$
\lim_{\eps \downarrow 0}
\gradflowue(\cdot,\cdot, (\widetilde u_\eps))
~\neq~
\lim_{\eps \downarrow 0}
\gradflowue(\cdot,\cdot, (\widehat u_\eps)).
$$
%
%possibly both the two limits not necessarily equal 
%to the limit (if any) of $\gradflowue(\cdot, \cdot, \inidatu)$.
Hypothesis \eqref{aia}
can thus be interpreted as an energetically well-prepared assumption
on the initial data $\inidatue$, corresponding to the 
choice of the above mentioned region $\Sigma 
=\gus(\inidatu) \setminus \lus(\inidatu)$. It is worth to remark that, in view of the $\Gamma$-limit $F_\eps \to F^{**}$ stated above, given any $\inidatu \in H^{-1}$, there exists 
a sequence $(\inidatue)$ converging to $\inidatu$ and satisfying \eqref{aia}.

The proof of our main result is entirely variational, and 
it is worthwhile to observe that 
we never use directly equation \eqref{eq:chgf}. 
The main point, indeed, is to derive sufficient information 
on a sequence $(\uestatic)$ of functions (independent of time)
satisfying the uniform bound
\begin{equation}\label{bevans}
\sup_{\eps \in (0,1]}
\left\{ \Fe(\uestatic) + \int_{\toro} 
\Big[~ \Big(W'(\uestatic) - \epss {\uestatic}_{xx}\Big)_x~\Big]^2~dx
\right\}
< +\infty.
\end{equation}
We follow an idea formalized by E. Sandier and S. Serfaty in \cite{Serfaty} (see also \cite{Serfy}), 
where it is shown that the convergence
of the gradient flows of a sequence of functionals $\mathcal F_\eps:H\to [0,+\infty]$, where $H$ is a Hilbert space, 
to the gradient flow of $\mathcal F:=\Gamma-\lim \mathcal F_\eps$
is basically a consequence of the $\Gamma$-convergence of the sequence
of the slopes of the gradients 
$\vert \grad \mathcal F_\eps\vert$ 
of $\mathcal F_\eps$ 
to the 
slope of the gradient
$\vert \grad\mathcal F\vert$ of $\mathcal F$.
More precisely, it suffices to show  
the  $\Gamma$-liminf inequality
\begin{equation}\label{gginf}
\Gamma-\liminf_{\eps\to 0}\ 
\vert\nabla \mathcal F_\eps \vert
\ \geq \ 
\vert\nabla\mathcal F\vert.
\end{equation}
The above inequality, in our setting, is the content of Theorem \ref{p:gammaslope}.
We then obtain the corresponding convergence of the gradient flows of $\Fe$ in Theorem \ref{teo:convergence}.
The main difficulty in the proof 
is contained in Lemma \ref{lemmamatteo},
where a careful analysis of the regions where the functions 
$\uestatic$ oscillate is performed.

We mention that the 
same method proposed in \cite{Serfaty} 
has been successfully applied in \cite{Le1,Le2} to show the convergence, in all space dimensions,
of solutions to the rescaled Cahn-Hilliard equation
\begin{equation*}
\begin{cases}
u_t  = \Delta \big(\eps^{-1}W'(u)-\eps \Delta u\big) 
\\
u(0,\cdot) = \inidatu_\eps,
\end{cases}
\end{equation*}
under suitable simplifying assumptions, in particular related
to the validity of the analog of \eqref{gginf}.

We observe that equation \eqref{eq:chgf}
is not the only way to regularize the ill-posed gradient flow equation of 
the functional \eqref{aobwsimagnamale}: other regularizations have been considered in the literature,
see for instance \cite{Pl:94,DeM:96,EvPo:04,GN:11,SmTe:11}.
In particular, in \cite{DeM:96} it is proposed an implicit variational scheme for the functional \eqref{aobwsimagnamale} 
which converges to \eqref{eq:hgfbis} as the discretization parameter tends to zero.
Due to the high instability of the problem, 
different regularizations could in principle lead to different limiting solutions.

\smallskip 

\noindent {\bf Acknowledgements.}
The authors are grateful to the Centro De Giorgi 
of the Scuola Normale di Pisa for the kind hospitality, and 
to the Mathematisches
Forschungsinstitut Oberwolfach for providing a stimulating research environment. The third author acknowledges the support of the ANR SHEPI grant ANR-2010-BLAN-0108.

%%%%%%%%%%%%%%%%%%%%%%%%%%%%%%%%%%%%%%%%%%%%%%%%%%%%%%%%%%%%%%%%%%%%%
\section{Notation}\label{s:1}
%%%%%%%%%%%%%%%%%%%%%%%%%%%%%%%%%%%%%%%%%%%%%%%%%%%%%%%%%%%%%%%%%%%%%
Let $\bb T:=\bb R/ \bb Z$ 
be the one-dimensional torus of side length $1$, and $dx$ be the 
Lebesgue measure on $\bb T$. For $m\in \bb R$, let
\begin{equation*}
 \spazio:= \big \{\ustatic
 \in H^{-1}(\mathbb T)\,:\:  \langle \ustatic, 1 \rangle =m\}
\end{equation*}
where $\langle \cdot, \cdot \rangle$ denotes the $H^{-1}(\bb T)$-$H^1(\bb T)$ duality. $\spazio$ is a 
closed affine subspace of $H^{-1}(\toro)$, that  will be 
considered equipped with the induced metric. 
The linear space associated with $\spazio$ 
is the homogeneous negative Sobolev space 
$$
\dot H^{-1}(\bb T) \sim \mathcal H^{-1}_0(\toro). 
$$
In the following, we denote by 
$\|\cdot\|_{-1}$ the Hilbert norm on $\dot H^{-1}(\bb T)$, namely
\begin{equation}
\label{e:Hmeno1}
\|\ustatic\|_{-1}^2:= \|\ustatic\|_{\dot H^{-1}(\bb T)}^2= 
\sup_{\varphi \in H^1(\bb T)}\left\{
2  \langle \ustatic,
\varphi\rangle - \|\varphi_x\|_{L^2(\bb T)}^2\right\},
\end{equation}
and we understand $\|\ustatic\|_{-1}:=+\infty$ if $\ustatic \not \in \dot H^{-1}(\bb T)$.

Throughout the paper, we use the term sequence also to denote
families labeled by the continuous positive parameter $\eps$. 
A subsequence
of $(f_\eps)$ is a sequence $(f_{\eps_h})$
with $\eps_h \downarrow  0$ as $h \to +\infty$.

%%%%%%%%%%%%%%%%%%%%%%%%%%%%%%%%%%%%%%%%%%%%%%%%%%%%%%%%%%%%%%%%%%%%%%%%
\subsection{Assumptions on $W$}\label{subsec:thepot}
%%%%%%%%%%%%%%%%%%%%%%%%%%%%%%%%%%%%%%%%%%%%%%%%%%%%%%%%%%%%%%%%%%%%%%%%
In the sequel we assume that
$W$ is a function in  $\mathcal C^2(\bb R; [0,+\infty))$ 
satisfying the following properties:
\begin{enumerate}
\item[i)] there exists a constant $C>0$ such that 
\begin{equation}\label{a1}
|W'(\var)| \leq C (1+W(\var)), \qquad \var \in \R,
\end{equation}
and 
$$
\lim_{\vert \var \vert \to +\infty} W(\var) = +\infty;
$$
\item[ii)] $W$ is not affine in any interval of $\R$;
\item[iii)] the global unstable set $\gus$ of $W$, as defined in 
\eqref{sugus}, 
is a bounded open set, consisting
of a finite number of open connected components, denoted by 
$$
\Sigma_1,\dots,\Sigma_\ell.
$$
\end{enumerate}
For the standard double-well potential \eqref{pdw} one has $\ell=1$ and $\Sigma_G=\Sigma_1=(-1,1)$.
%%%%%%%%%%%%%%%%%%%%%%%%%%%%%%%%%%%%%%%%%%%%%%%%%%%%%%%%%%%%%%%%%%%%%
\subsection{The functionals $\Fe$, $\F$, 
$\vert \grad \Fe\vert$, $\vert \grad \F\vert$}\label{sec:ene}
%%%%%%%%%%%%%%%%%%%%%%%%%%%%%%%%%%%%%%%%%%%%%%%%%%%%%%%%%%%%%%%%%%%%%
For any $\eps \in (0,1]$ 
we indicate by
$$
\Fe
:\spazio \to [0,+\infty]
$$
the functional defined as
\begin{equation*}
\Fe(
\ustatic
):=
\begin{cases}
  \displaystyle \int_{\toro}\Big( \epss \frac{(\ustatic_x)^2}{2} +W(\ustatic)\Big)\,dx 
         & \text{if $\ustatic_x  \in L^2(\mathbb T)$ 
                 and $W(\ustatic) \in L^1(\toro)$,
 }
  \\
  +\infty & \text{elsewhere},
\end{cases}
\end{equation*}
and by 
$$
\F : \spazio \to [0,+\infty]
$$
the functional defined as
\begin{equation*}
\F(\ustatic):=
\begin{cases}
\displaystyle \int_{\toro} W^{**}(\ustatic)~dx & {\rm if}~ W^{**}(\ustatic) \in L^1(\toro),
\\
+\infty & {\rm elsewhere}.
\end{cases}
\end{equation*}
It is clear that $\F$ is a convex functional.

We denote by 
$$
|\nabla \Fe|
\colon \spazio \to
[0,+\infty]
$$
the functional defined as
\begin{equation*}
|\nabla \Fe|(\ustatic):=
\begin{cases}
 \|\ 
(W'(\ustatic)-\epss  \ustatic_{xx})_{x}
\|_{L^2(\toro)}
                & \text{if $\Fe(\ustatic)<+\infty$ and}
\\ {} &
\text{$(W'(\ustatic)-\epss  \ustatic_{xx})_{x} \in L^2(\toro)$},
  \\
\\
  +\infty & \text{elsewhere},
\end{cases}
\end{equation*}
and by 
$$
\vert\nabla \F\vert: \spazio \to [0,+\infty]
$$
the functional defined as
\begin{equation*}
|\nabla \F|(\ustatic):=
\begin{cases}
 \|(W^{**~\!\prime}(\ustatic))_{x}\|_{L^2(\toro)}
              & \text{if $\F(\ustatic)<+\infty$ 
and $(W^{**~\!\prime}(\ustatic))_x \in L^2(\toro)$},
  \\
  +\infty & \text{elsewhere}.
\end{cases}
\end{equation*}

%%%%%%%%%%%%%%%%%%%%%%%%%%%%%%%%%%%%%%%%%%%%%%%%%%%%%%%%%%%%%
\section{Statement of the main result}\label{sub:grad}
%%%%%%%%%%%%%%%%%%%%%%%%%%%%%%%%%%%%%%%%%%%%%%%%%%%%%%%%%%%%%
Given $\eps \in (0,1]$ and 
$\inidatue \in \spazio$ such that
$$
\Fe(\inidatue) < +\infty,
$$
we let 
$\gradflowue
\in \mathcal C^\infty((0,+\infty)\times\toro)\cap \mathcal C^0([0,+\infty);\spazio)$
be the solution 
to the Cauchy problem
\begin{equation}
\label{eq:chgfmarino}
\begin{cases}
{\rm u}_t  = \Big(W'({\rm u})-\epss {\rm u}_{xx}\Big)_{xx}
                      & \text{in~ $(0,+\infty)\times \bb T$},
 \\
{\rm u} ~= \inidatue & \text{on $\{0\}\times \bb  T$}.
\end{cases}
\end{equation}
We notice that $\gradflowue$ is the gradient flow of $\Fe$ in $\spazio$ starting at $\inidatue$ 
in the sense of \cite{AmGiSa:08}, that is, it satisfies:
\begin{itemize}
\item[-] 
$\gradflowue \in 
AC^2
\left([0,+\infty); \spazio\right)$,
where $AC^2\left([0,+\infty); \spazio\right)$
denotes the space of absolutely continuous curves from $[0,+\infty)$ 
to $\spazio$ having derivative in $L^2((0,+\infty))$,
\item[-]
$(0,+\infty) \ni t \mapsto 
\vert \grad \Fe\vert (\gradflowue(t))$ belongs to
$L^2((0,+\infty))$,
\item[-] 
for all $t\geq 0$
\begin{equation}
\label{eq:gf1}
\Fe(\inidatue) = 
\Fe(\gradflowue(t))+ \frac{1}{2} 
       \int_0^t \|\partial_t \gradflowue(s)\|_{-1}^2\,ds  
    + \frac{1}{2} \int_0^t |\nabla \Fe|^2(\gradflowue(s)) \,ds.
\end{equation}
\end{itemize}

A differential characterization of 
the gradient flow of $\F$ in $\spazio$ 
is more delicate, as regularity issues appear. 
Indeed, the function $W^{**}$ is just of class
$\mathcal C^{1,1}(\R)$, and not of class $\mathcal C^2(\R)$. Yet it is 
possible to see that $\vert\nabla \F\vert $ is a strong upper gradient for $\F$ in the sense of \cite[Definition~1.2.1]{AmGiSa:08}, so that from the general theory of maximal monotone operators 
(see for instance \cite[Theorem 3.2]{Br:73}) 
one gets 
the following result.

\begin{proposition}[{\bf Gradient flow of $\F$}]
\label{p:goodgfuf}
Let $\inidatu \in \spazio$ be such that 
$$
\F(\inidatu)<+\infty.
$$
Then there exists a unique gradient flow solution $\gradflowu$ of $\F$ 
starting at $\inidatu$, which satisfies
\begin{itemize}
\item[-]
$\gradflowu
\in AC^2\left([0,+\infty); \spazio\right)$,
\item[-] $(0,+\infty)\ni t \mapsto 
\vert \nabla \F\vert (\gradflowu(t))$ belongs to $L^2((0,+\infty))$,
\item[-] for
all $t\ge 0$
\begin{equation}
\label{eq:gf1chec}
\F(\inidatu)
=
\F(\gradflowu(t))+ \frac{1}{2} 
          \int_0^t \|\partial_t \gradflowu(s)\|_{-1}^2\,ds  
        + \frac{1}{2} \int_0^t |\nabla \F|^2(\gradflowu(s)) \,ds.
\end{equation}
\end{itemize}
\end{proposition}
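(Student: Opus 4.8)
The plan is to realize $\gradflowu$ as the trajectory of the semigroup generated by the subdifferential $\partial\F$ of $\F$ in the Hilbert space $\spazio$, and then to read off the three bullet points from the standard theory of evolution equations governed by subdifferentials. So the structure is: (i) check that $\F$ is convex, proper and lower semicontinuous, so that $\partial\F$ is maximal monotone; (ii) compute $\partial\F$ explicitly and match its minimal section with $|\nabla\F|$; (iii) apply the Brezis--Komura theorem; (iv) derive the energy identity \eqref{eq:gf1chec} and the integrability statements by the chain rule for convex functions along the flow.

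For step (i): convexity of $\F$ is immediate from convexity of $W^{**}$, properness follows from $\F(\inidatu)<+\infty$, and lower semicontinuity with respect to the $\spazio$-topology is a by-product of the $\Gamma$-convergence $\Fe\to\F$ of Proposition~\ref{l:mm} (a $\Gamma$-limit being always lower semicontinuous); alternatively it can be obtained directly from convexity of $W^{**}$ and the coercivity encoded in \eqref{a1}, via lower semicontinuity of convex integral functionals. Hence $\partial\F$ is a maximal monotone operator on $\spazio$.

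The key step is (ii). Since $W^{**}\in\mathcal C^{1,1}(\R)$, the map $W^{**\prime}$ is single valued and globally Lipschitz, and one shows that $\xi\in\partial\F(\staticu)$ if and only if $\F(\staticu)<+\infty$, $(W^{**\prime}(\staticu))_x\in L^2(\toro)$ and $\xi=-(W^{**\prime}(\staticu))_{xx}$, interpreted in $\dot H^{-1}(\toro)$: testing the subgradient inequality against zero-mean $L^\infty$ perturbations of $\staticu$ and using convexity of $W^{**}$ pins the candidate down, while single-valuedness of $\partial W^{**}$ rules out other elements. Because $\|-(g)_{xx}\|_{-1}=\|g_x\|_{L^2(\toro)}$, which follows from \eqref{e:Hmeno1}, the minimal-norm element of $\partial\F(\staticu)$ has $\spazio$-norm exactly $\|(W^{**\prime}(\staticu))_x\|_{L^2(\toro)}=|\nabla\F|(\staticu)$. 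Thus $|\nabla\F|$ is the metric slope of $\F$, and, being the slope of a convex lower semicontinuous functional on a Hilbert space, it is a strong upper gradient in the sense of \cite[Definition~1.2.1]{AmGiSa:08}. I expect this to be the main obstacle, precisely because $W^{**}$ is not $\mathcal C^2$, so the formal equation $\partial_t u=(W^{**\prime}(u))_{xx}$ must be handled through the convex-analytic description of $\partial\F$ rather than through classical parabolic regularity, and it is this identification that ties \eqref{eq:gf1chec} to the functional $|\nabla\F|$ as defined.

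Granted this, I would invoke the Brezis--Komura theorem \cite[Theorem~3.2]{Br:73}: for $\inidatu\in D(\F)\subseteq\overline{D(\partial\F)}$ there is a unique curve $\gradflowu\in\mathcal C^0([0,+\infty);\spazio)$, locally Lipschitz on $(0,+\infty)$, with $\gradflowu(0)=\inidatu$, $\gradflowu(t)\in D(\partial\F)$ and $\partial_t\gradflowu(t)=-\partial^\circ\F(\gradflowu(t))$ for a.e.\ $t>0$, that is $\partial_t\gradflowu=(W^{**\prime}(\gradflowu))_{xx}$ in $\dot H^{-1}(\toro)$, the weak Stefan problem \eqref{eq:hgfbis}; uniqueness and the contraction estimate follow from monotonicity of $\partial\F$. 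The chain rule for convex functions along the flow then gives $\frac{d}{dt}\F(\gradflowu(t))=-\|\partial_t\gradflowu(t)\|_{-1}^2=-|\nabla\F|^2(\gradflowu(t))$ for a.e.\ $t>0$, with $t\mapsto\F(\gradflowu(t))$ absolutely continuous and nonincreasing on $(0,+\infty)$. Integrating on $(s,t)$, splitting the dissipation into two equal halves, letting $s\downarrow0$ (here $\F(\gradflowu(s))\to\F(\inidatu)$, by lower semicontinuity of $\F$ together with the monotonicity just established and $\F(\gradflowu(t))\le\F(\inidatu)$) and $t\uparrow+\infty$ (using $\F\ge0$), one gets \eqref{eq:gf1chec} for all $t\ge0$ together with $\tfrac12\int_0^{+\infty}\|\partial_t\gradflowu\|_{-1}^2\,ds=\tfrac12\int_0^{+\infty}|\nabla\F|^2(\gradflowu)\,ds\le\F(\inidatu)<+\infty$, which yields $\gradflowu\in AC^2([0,+\infty);\spazio)$ and $|\nabla\F|(\gradflowu(\cdot))\in L^2((0,+\infty))$, completing the proof.
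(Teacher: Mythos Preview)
Your proposal is correct and follows precisely the route the paper indicates: the paper does not prove Proposition~\ref{p:goodgfuf} in detail but simply observes that $|\nabla\F|$ is a strong upper gradient (in the sense of \cite[Definition~1.2.1]{AmGiSa:08}) and then invokes the general theory of maximal monotone operators, citing \cite[Theorem~3.2]{Br:73}. Your write-up is a faithful and careful expansion of exactly this argument---convexity/lower semicontinuity of $\F$, identification of $\partial\F$ with $-(W^{**\prime}(\cdot))_{xx}$ so that the minimal section has norm $|\nabla\F|$, application of the Brezis--Komura theorem, and the chain-rule derivation of the energy identity.
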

\noindent Note that $\gradflowu$ solves equation \eqref{eq:hgfbis}
in the sense of distributions. 

We are now in the position to state
the main result of this paper.

\begin{theorem}[{\bf Convergence of solutions}]\label{teo:convergence}
Let $\inidatue,\,\inidatu \in \spazio$ be such that 
$$
\Fe(\inidatue) < +\infty, \qquad
\F(\inidatu) < +\infty.
$$
Suppose that  
\begin{equation}\label{eq:convetempozero}
\lim_{\eps \downarrow 0} \inidatue = \inidatu \qquad {\rm in}~ \spazio
\end{equation}
and 
\begin{equation}\label{limsuptempozero}
\lim_{\eps \downarrow 0}~ \Fe(\inidatue)= \F(\inidatu).
\end{equation}
Then for any $T >0$, 
\begin{equation}\label{eq:convergprima}
\lim_{\eps \downarrow 0} 
\gradflowue
 =\gradflowu
\quad {\rm in} ~ \mathcal C^0([0,T]; \spazio) 
\end{equation}
and 
$$
\lim_{\eps \downarrow 0} \int_0^T \Big(
\vert \nabla \Fe\vert (\gradflowue(t))-
\vert \nabla \F\vert (\gradflowu(t)) \Big)^2 ~dt =0.
$$
In particular 
$$
\lim_{\eps\downarrow 0} \Fe(\gradflowue(t))=\F(\gradflowu(t)), \qquad 
t\ge 0.
$$
\end{theorem}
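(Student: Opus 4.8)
The plan is to follow the scheme of \cite{Serfaty}, building on three facts proved elsewhere in the paper: the $\Gamma$-convergence $\Fe\to\F$ in $\spazio$ (Proposition~\ref{l:mm}), the $\Gamma$-liminf inequality for the slopes (Theorem~\ref{p:gammaslope}), and the uniqueness of the gradient flow of $\F$ (Proposition~\ref{p:goodgfuf}). Since the candidate limit $\gradflowu$ is uniquely determined, it is enough to show that every subsequence of $(\gradflowue)$ has a further subsequence along which all the claimed convergences hold with limit $\gradflowu$; the statements for the full family then follow by a standard subsequence argument.

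\emph{Step 1: a priori estimates and compactness.} First I would read off from the energy identity \eqref{eq:gf1} and the well-preparedness \eqref{limsuptempozero} that $\Fe(\inidatue)$ is bounded uniformly in $\eps$, and hence so are $\sup_{t\ge0}\Fe(\gradflowue(t))$, $\int_0^{+\infty}\|\partial_t\gradflowue(s)\|_{-1}^2\,ds$ and $\int_0^{+\infty}|\grad\Fe|^2(\gradflowue(s))\,ds$. The $L^2$ bound on $\partial_t\gradflowue$ yields the equicontinuity estimate $\|\gradflowue(t)-\gradflowue(s)\|_{-1}\le C\,|t-s|^{1/2}$, and the bound $\Fe(\gradflowue(t))\le C$, together with the coercivity of the functionals $\Fe$, makes $\{\gradflowue(t):\eps\in(0,1],\ t\in[0,T]\}$ relatively compact in $\spazio$ for each $T$. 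Arzel\`a--Ascoli and a diagonal argument in $T$ then produce a subsequence with $\gradflowue\to\gradflowu$ in $\mathcal C^0([0,T];\spazio)$ for every $T>0$ and $\partial_t\gradflowue\rightharpoonup\partial_t\gradflowu$ weakly in $L^2((0,T);\dot H^{-1}(\toro))$; in particular $\gradflowu\in AC^2([0,+\infty);\spazio)$ and $\gradflowu(0)=\inidatu$ by \eqref{eq:convetempozero}.

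\emph{Step 2: passing to the limit in the energy identity.} Next I would fix $t\ge0$ and take $\liminf_{\eps\downarrow0}$ in \eqref{eq:gf1}; its left-hand side is $\Fe(\inidatue)\to\F(\inidatu)$, while the three terms on the right are handled separately. One has $\liminf_\eps\Fe(\gradflowue(t))\ge\F(\gradflowu(t))$ by the $\Gamma$-liminf inequality of Proposition~\ref{l:mm} (using $\gradflowue(t)\to\gradflowu(t)$ in $\spazio$); $\liminf_\eps\int_0^t\|\partial_t\gradflowue\|_{-1}^2\,ds\ge\int_0^t\|\partial_t\gradflowu\|_{-1}^2\,ds$ by weak lower semicontinuity of the $L^2$-norm; and, since $\gradflowue(s)\to\gradflowu(s)$ in $\spazio$ for every $s$, Theorem~\ref{p:gammaslope} gives $\liminf_\eps|\grad\Fe|(\gradflowue(s))\ge|\grad\F|(\gradflowu(s))$ pointwise in $s$, so Fatou's lemma yields $\liminf_\eps\int_0^t|\grad\Fe|^2(\gradflowue)\,ds\ge\int_0^t|\grad\F|^2(\gradflowu)\,ds$. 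Collecting these inequalities I would obtain, for every $t\ge0$,
\begin{equation*}
\F(\inidatu)\ \ge\ \F(\gradflowu(t))+\frac12\int_0^t\|\partial_t\gradflowu(s)\|_{-1}^2\,ds+\frac12\int_0^t|\grad\F|^2(\gradflowu(s))\,ds.
\end{equation*}

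\emph{Step 3: the reverse inequality and the conclusions.} The opposite inequality is automatic: because $|\grad\F|$ is a strong upper gradient for $\F$ (as recalled before Proposition~\ref{p:goodgfuf}) and $\gradflowu\in AC^2([0,+\infty);\spazio)$ with $\gradflowu(0)=\inidatu$, the chain rule along $\gradflowu$ and Young's inequality give $\F(\inidatu)-\F(\gradflowu(t))\le\frac12\int_0^t\|\partial_t\gradflowu\|_{-1}^2\,ds+\frac12\int_0^t|\grad\F|^2(\gradflowu)\,ds$. Hence equality holds for every $t\ge0$, i.e. $\gradflowu$ satisfies \eqref{eq:gf1chec} and is therefore the unique gradient flow of $\F$ from $\inidatu$ (Proposition~\ref{p:goodgfuf}); this proves \eqref{eq:convergprima}. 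Moreover, the right-hand side of \eqref{eq:gf1} is constant in $\eps$ and equal to the right-hand side of \eqref{eq:gf1chec} in the limit, while each of its three terms has liminf no smaller than the corresponding term of \eqref{eq:gf1chec}; this forces each term to converge, whence $\Fe(\gradflowue(t))\to\F(\gradflowu(t))$ for every $t\ge0$ and $\int_0^T|\grad\Fe|^2(\gradflowue)\,dt\to\int_0^T|\grad\F|^2(\gradflowu)\,dt$. Setting $g_\eps:=|\grad\Fe|(\gradflowue(\cdot))$ and $g:=|\grad\F|(\gradflowu(\cdot))$, I would then expand $\int_0^T(g_\eps-g)^2$ and note that $\int_0^Tg_\eps g\to\int_0^Tg^2$, since $\limsup_\eps\int_0^Tg_\eps g\le\int_0^Tg^2$ by Cauchy--Schwarz and $\liminf_\eps\int_0^Tg_\eps g\ge\int_0^Tg^2$ by Fatou (as $g_\eps g\ge0$ and $\liminf_\eps g_\eps\ge g$ pointwise), so that $\int_0^T(g_\eps-g)^2\to0$. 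I expect the only delicate points within this argument to be the pointwise-in-time compactness of $(\gradflowue(t))$ in $\spazio$ and the bookkeeping in Step~3 that turns the scalar equality in the energy identity into the convergence of each separate term; the genuinely hard input is Theorem~\ref{p:gammaslope} itself, which rests on the oscillation analysis of Lemma~\ref{lemmamatteo}.
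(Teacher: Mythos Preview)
Your proof is correct and follows the same Sandier--Serfaty scheme as the paper's proof in Section~\ref{sec:obw}. You are in fact more explicit than the paper in Step~3, where you show how the saturation of the chain of liminf inequalities forces the convergence of each term separately and hence the $L^2$-convergence of the slopes and the pointwise-in-time convergence of the energies (the paper simply stops after identifying the limit curve); the only slip is that it is the \emph{left}-hand side of \eqref{eq:gf1}, not the right, that converges to $\F(\inidatu)$.
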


As already mentioned,  
following \cite{Serfaty}, the main ingredient to prove 
Theorem \ref{teo:convergence} is the following 
(time independent) result, which concerns the 
$\Gamma$-limit
of the slope
in $\spazio$ of the functionals $\Fe$.
\begin{theorem}[{\bf $\Gamma$-liminf of $(\vert \grad \Fe\vert)$}]
\label{p:gammaslope}
Let $\ustatic \in \spazio$ and let
$(\uestatic)$ be a sequence in $\spazio$ such that 
\begin{equation}\label{eq:uu}
\lim_{\eps
  \downarrow 0} \uestatic = \ustatic \qquad {\rm in}~ \spazio
\end{equation}
and
\begin{equation}\label{eq:enequilim}
\sup_{\eps \in (0,1]} \Fe(\uestatic) < +\infty.
\end{equation}
Then
\begin{equation}\label{eq:semigrad}
\liminf_{\eps \downarrow 0} |\nabla \Fe|(\uestatic)
\ge |\nabla \F|(\ustatic).
\end{equation}
\end{theorem}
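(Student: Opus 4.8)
The plan is to establish \eqref{eq:semigrad} by reducing the problem to a careful understanding of the regions where the approximating profiles $\uestatic$ develop microstructure. We may assume $\liminf_\eps |\nabla \Fe|(\uestatic) < +\infty$, so along a subsequence $\|(W'(\uestatic)-\epss \uestatic_{xx})_x\|_{L^2(\toro)}$ is bounded; together with \eqref{eq:enequilim} this is precisely the uniform bound \eqref{bevans}. Write $\some_\eps := W'(\uestatic) - \epss \uestatic_{xx}$ for the (generalized) chemical potential, so that $|\nabla \Fe|(\uestatic) = \|(\some_\eps)_x\|_{L^2(\toro)}$. The bound on $(\some_\eps)_x$ in $L^2$, combined with an $L^1$ or $L^2$ control on $\some_\eps$ itself coming from the energy bound and \eqref{a1}, yields that $(\some_\eps)$ is bounded in $H^1(\toro)$, hence precompact in $\mathcal C^0(\toro)$; passing to a further subsequence, $\some_\eps \to \some$ uniformly with $(\some_\eps)_x \rightharpoonup \some_x$ weakly in $L^2(\toro)$, so by weak lower semicontinuity of the norm it suffices to prove the pointwise/distributional identity $\some = (W^{**\,\prime}(\ustatic))_x$, i.e. that the limiting chemical potential is the derivative of $W^{**\,\prime}$ evaluated along $\ustatic$. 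Equivalently, one must show $\some(x) = W^{**\,\prime}(\ustatic(x))$ for a.e.\ $x$, which then also gives $W^{**\,\prime}(\ustatic) \in H^1$ so that the right-hand side of \eqref{eq:semigrad} is finite and the inequality follows from $\liminf \|(\some_\eps)_x\|_{L^2} \ge \|\some_x\|_{L^2} = |\nabla \F|(\ustatic)$.

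To identify $\some = W^{**\,\prime}(\ustatic)$ I would argue locally in $x$, distinguishing the behaviour of $\uestatic$ according to the value of the limit point. First, by the Modica--Mortola type structure of $\Fe$ and the uniform energy bound, one expects $\uestatic$ to be compact in $L^1(\toro)$ (or a.e.); here the $H^{-1}$-convergence \eqref{eq:uu} pins the weak-$*$ limit of (a suitable average of) $\uestatic$ to $\ustatic$. On any region where $\ustatic$ takes values outside $\overline{\gus}$, convexity of $W$ there forces $\uestatic \to \ustatic$ strongly and $\epss \uestatic_{xx} \to 0$ in a weak sense, so $\some_\eps \to W'(\ustatic) = W^{**\,\prime}(\ustatic)$. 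The delicate case is where $\ustatic$ takes values in $\overline{\gus}$: there $\uestatic$ may oscillate between the two branches of $\{W = W^{**}\}$, and one must show that, despite the oscillation, the chemical potential $\some_\eps$ converges to the constant value of $W^{**\,\prime}$ on the relevant flat part of $W^{**}$ (equivalently, to the common tangent slope). This is exactly the content of the announced Lemma~\ref{lemmamatteo}: a quantitative analysis of the ``wrinkling'' regions, showing that the cost of oscillation forces the one-sided values of $W'(\uestatic)$ at the interfaces — and hence $\some_\eps$, which is continuous — to match the Maxwell/common-tangent value. I would invoke that lemma here, using it to conclude that on $\{\ustatic \in \gus\}$ the limit $\some$ equals $W^{**\,\prime}(\ustatic)$ (a constant on each connected component $\Sigma_i$ in the variable $\xi$), and that on the ``boundary'' set $\{\ustatic \in \partial\gus\}$ continuity of $\some$ and of $W^{**\,\prime}$ glue the two descriptions.

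Having the pointwise identification $\some(x) = W^{**\,\prime}(\ustatic(x))$ a.e., together with $\some \in H^1(\toro)$, I would first note this shows $|\nabla \F|(\ustatic) < +\infty$ (in particular $\F(\ustatic)<+\infty$, using that $W^{**}$ has linear growth controlled by $W$), and then close the argument by lower semicontinuity: for any test function $\varphi \in H^1(\toro)$,
\begin{equation*}
\int_\toro \some_\eps\, \varphi_x \,dx \longrightarrow \int_\toro \some\, \varphi_x\,dx = -\int_\toro (W^{**\,\prime}(\ustatic))_x\, \varphi\,dx,
\end{equation*}
so $(\some_\eps)_x \rightharpoonup (W^{**\,\prime}(\ustatic))_x$ in $L^2(\toro)$ and hence
\begin{equation*}
\liminf_{\eps\downarrow 0} |\nabla \Fe|(\uestatic) = \liminf_{\eps\downarrow 0}\|(\some_\eps)_x\|_{L^2(\toro)} \ge \|(W^{**\,\prime}(\ustatic))_x\|_{L^2(\toro)} = |\nabla\F|(\ustatic),
\end{equation*}
which is \eqref{eq:semigrad}. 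The main obstacle is unquestionably the middle step — controlling the oscillatory regions and proving that the chemical potential equilibrates to the common-tangent slope across wrinkling zones — and that is why the heavy lifting is deferred to Lemma~\ref{lemmamatteo}; the rest is compactness and semicontinuity bookkeeping, modulo the standard Modica--Mortola/slicing estimates needed to extract $L^1$ compactness of $\uestatic$ and $H^1$ compactness of $\some_\eps$ from \eqref{bevans}.
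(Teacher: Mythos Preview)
Your overall architecture---compactness of the chemical potential $\some_\eps := W'(\uestatic)-\epss(\uestatic)_{xx}$ in $H^1(\toro)$, identification of its limit, then weak lower semicontinuity of $\|(\cdot)_x\|_{L^2}$---is exactly the paper's strategy. The gap is in the identification step: you assert $\some(x)=W^{**\prime}(\ustatic(x))$ for a.e.\ $x$, but this is \emph{false} in general. Take $\uestatic\equiv c$ constant with $c$ in the interior of $\gus$ (e.g.\ $c=1/2$ for the potential \eqref{pdw}); then all hypotheses hold, $\some_\eps\equiv W'(c)$, so $\some\equiv W'(c)$, whereas $W^{**\prime}(\ustatic)\equiv W^{**\prime}(c)$, and these differ. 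Lemma~\ref{lemmamatteo} does not force the chemical potential to the common-tangent value on the wrinkling region---it only confines \emph{large} oscillations of $\uestatic$ to a neighbourhood of $\gus$, a much weaker conclusion.

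The paper avoids this by not claiming pointwise equality on all of $\toro$. It passes through Young measures (Lemma~\ref{l:mu}), identifies $\some=\mu(W')$, and proves $\mu_x(W')=W^{**\prime}(\ustatic(x))$ \emph{only} on the open set $\Omega=\{\ustatic\notin\overline{\gus}\}$, using Lemma~\ref{l:youngdelta} (built on Lemma~\ref{lemmamatteo}) to show $\mu_x$ is a Dirac there. On the complement nothing is said about $\some$; instead one uses that $W^{**\prime}(\ustatic)$ is \emph{locally constant} there, whence
\[
\|\some_x\|_{L^2(\toro)}^2 \ \ge\ \|\some_x\|_{L^2(\Omega)}^2 \ =\ \|(W^{**\prime}(\ustatic))_x\|_{L^2(\Omega)}^2 \ =\ \|(W^{**\prime}(\ustatic))_x\|_{L^2(\toro)}^2.
\]
Making this rigorous (that $\Omega$ has an open representative, that $W^{**\prime}(\ustatic)$ extends continuously across $\partial\Omega$ so as to lie in $H^1(\toro)$) requires the additional Corollaries~\ref{cor:finale}--\ref{cor:berlus}. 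A minor point: your appeal to ``Modica--Mortola compactness'' for $L^1$-compactness of $\uestatic$ is misplaced, since the energies here are not in that scaling; the paper instead derives a uniform $L^\infty$ bound on $\uestatic$ (Lemma~\ref{lemmazero}) from the combined energy and slope bounds.
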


We expect 
a full $\Gamma$-convergence result
to hold for $(\vert \grad \Fe\vert)$, 
however such result is not needed
in order to prove
Theorem \ref{teo:convergence}.

%%%%%%%%%%%%%%%%%%%%%%%%%%%
\section{Proof of Theorem~\ref{p:gammaslope}: preliminary lemmata}
\label{sec:prel}
%%%%%%%%%%%%%%%%%%%%%%%%%%%
We first introduce some regularity remarks for fixed $\eps>0$, that will be used in the following to establish uniform estimates.

\begin{Remark}\label{euno}\rm
We have
$$
\Fe(\ustatic) < +\infty \Rightarrow \ustatic \in L^\infty(\toro).
$$
Indeed, for $x_1,x_2\in\toro$,
$$
|\ustatic(x_1) - \ustatic(x_2)| \leq \int_{\toro} \vert
{\ustatic}_x\vert~dx \leq \left(
\int_\toro ({\ustatic}_x)^2~dx
\right)^{1/2} < +\infty.
$$
Hence,
recalling that 
$\displaystyle \int_{\toro} \ustatic ~dx = m$, it follows
$\ustatic \in L^\infty(\mathbb T)$.
\end{Remark}

\begin{definition}[{\bf The function $\chempote(\ustatic)$}]
If $\ustatic$ belongs to the domain of $\vert \grad \Fe\vert$,
we set
$$
\chempote(\ustatic) := W'(\ustatic) - \epss \ustatic_{xx}.
$$
\end{definition}

\begin{Remark}\label{pal}\rm
We have 
$$
\vert \grad \Fe\vert(\ustatic) <+\infty \Rightarrow
\ustatic \in H^3(\toro).$$
 In particular, if $\vert \grad \Fe (v)\vert<+\infty$ then
\begin{equation}\label{punto}
|\nabla \Fe|(\ustatic)=
 \|\ (W'(\ustatic)-\epss  \ustatic_{xx})_{xx}\|_{-1} = 
\sup_{\varphi \in H^1(\toro)}
\left\{ 2 \langle
\chempote(\ustatic)_{xx},\varphi\rangle
- \Vert \varphi_x\Vert^2_{L^2(\toro)}
\right\}.
\end{equation}

Indeed, remembering Remark \ref{euno}, we have $\ustatic
\in L^\infty(\toro)$. Hence, from 
the assumption 
$\Fe(\ustatic) < +\infty$ it follows
\begin{equation}\label{stimadue}
W'(\ustatic)_x = W''(\ustatic) {\ustatic}_x\in L^2(\toro).
\end{equation}
{}From \eqref{stimadue} and 
the assumption $\vert \nabla \Fe\vert (\ustatic) < +\infty$, we obtain
$
\ustatic_{xxx} \in L^2(\toro)
$
and therefore $\ustatic\in H^3(\mathbb T)$.

Such a regularity allows integration by parts in the 
expression obtained of
 $\|\ (W'(\ustatic)-\epss  \ustatic_{xx})_{xx}\|_{-1} $ from the 
rightmost equality in \eqref{e:Hmeno1}, namely \eqref{punto} holds.
\end{Remark}

We next establish uniform bounds to be used for the proof of Theorem \ref{p:gammaslope}.
\begin{lemma}[{\bf Uniform $L^\infty$-bound}]\label{lemmazero}
Let $\ve \in \spazio$ be such that 
\begin{equation}\label{eq:unifenergy}
\sup_{\eps \in (0,1]}
\Big(\Fe(\ve) + \vert
\nabla \Fe\vert(\ve)\Big) < +\infty.
\end{equation}
Then
\begin{equation}\label{stimainfinity}
\sup_{\eps \in (0,1]}
\Vert \ve\Vert_{L^\infty(\toro)} < + \infty.
\end{equation}
Moreover 
%\footnote{
%To ensure the existence of a subsequence converging 
%in $\spazio$ it is enough to require $\sup_{\eps 
%\in (0,1]} \Fe(\ve) < +\infty$, since
%$W$ has at least linear growth at infinity,
%and $L^1(\toro)$ compactly embeds in $H^{-1}(\toro)$.}, 
$(\ve)$ admits a converging subsequence in $\spazio$.
\end{lemma}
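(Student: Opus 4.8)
The plan is to establish the uniform $L^\infty$-bound first and then deduce compactness from it together with the energy bound. For the $L^\infty$-bound, I would argue by contradiction: suppose there is a subsequence along which $\|\ve\|_{L^\infty(\toro)} \to +\infty$. By Remark~\ref{euno}, each $\ve$ is continuous, and since $\int_\toro \ve\,dx = m$ is fixed, there is a point where $\ve$ takes a value close to $m$; hence $\ve$ must have a large oscillation if its sup-norm is large. The oscillation is controlled by $\int_\toro |{\ve}_x|\,dx \le \big(\int_\toro ({\ve}_x)^2\,dx\big)^{1/2}$, which in turn is bounded by $\sqrt{2}\,\eps^{-1}\sqrt{\Fe(\ve)}$ — but that contains a factor $\eps^{-1}$ and is therefore \emph{not} uniformly bounded, so the naive Sobolev argument of Remark~\ref{euno} is insufficient. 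This is the main obstacle: one must exploit the slope bound $\vert\nabla\Fe\vert(\ve) < +\infty$, not just the energy bound.

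To get around this, I would use the chemical potential $\chempote(\ve) = W'(\ve) - \epss{\ve}_{xx}$. The slope bound says $\chempote(\ve)_x \in L^2(\toro)$ with uniformly bounded $L^2$-norm, so $\chempote(\ve)$ itself has uniformly bounded oscillation; combined with a bound on its average (obtained by testing against $1$, using $\int W'(\ve)\,dx$, which is controlled via \eqref{a1} by $C(1 + \Fe(\ve))$), one gets a uniform $L^\infty$-bound on $\chempote(\ve)$. The point is now to transfer this to an $L^\infty$-bound on $\ve$. Multiply the identity $\epss {\ve}_{xx} = W'(\ve) - \chempote(\ve)$ by a suitable test function — for instance, integrate against ${\ve}_x$ times a cutoff, or more directly test the ODE against $W'(\ve)$ or against $\ve$ itself and integrate by parts. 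Testing against $W'(\ve)$ gives $\epss\int W''(\ve)({\ve}_x)^2\,dx = \int W'(\ve)^2\,dx - \int \chempote(\ve)W'(\ve)\,dx$; this controls $\int W'(\ve)^2\,dx$ in terms of known quantities once one handles the negative-curvature part of $W$ (which is confined to the bounded set $\lus$, hence contributes a bounded error via \eqref{a1} and the energy bound). Since $W(\var)\to+\infty$ as $|\var|\to\infty$ and $|W'|$ grows at least like a positive power there (a consequence of $W\in\mathcal C^2$, coercive, not affine on any interval), a uniform bound on $\int W'(\ve)^2\,dx$ together with the oscillation control forces \eqref{stimainfinity}. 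One has to be slightly careful to phrase the coercivity of $W'$ correctly, but assumption~\eqref{a1} together with $W\ge 0$ and $W\to+\infty$ suffices.

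Once \eqref{stimainfinity} is in hand, the existence of a converging subsequence in $\spazio$ is routine: the uniform energy bound $\sup_\eps \Fe(\ve) < +\infty$ together with \eqref{stimainfinity} — or even just the bound on $\int W(\ve)\,dx$ and the trivial estimate $\|\ve\|_{-1} \le C\|\ve\|_{L^2} \le C\|\ve\|_{L^\infty}$ — gives that $(\ve)$ is bounded in $L^2(\toro)$, hence bounded in $H^{-1}(\toro)$. Since bounded sets of $L^2(\toro)$ are precompact in $H^{-1}(\toro)$ (the embedding $L^2(\toro)\hookrightarrow H^{-1}(\toro)$ is compact on the torus), and the mass constraint $\langle \ve, 1\rangle = m$ passes to the limit, we extract a subsequence converging in $\spazio$. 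I expect the $L^\infty$-bound to be the only substantive step; the compactness is standard functional analysis.
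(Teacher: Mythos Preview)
Your argument up to the uniform $L^\infty$-bound on $\chempote(\ve)$ is correct and matches the paper exactly: the slope bound controls $\|{\chempote(\ve)}_x\|_{L^2(\toro)}$, periodicity gives $\int_\toro \chempote(\ve)\,dx = \int_\toro W'(\ve)\,dx$, and \eqref{a1} together with the energy bound controls this mean.

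The gap is in the step from $\|\chempote(\ve)\|_{L^\infty(\toro)}\le K$ to $\|\ve\|_{L^\infty(\toro)}\le M$. Your testing argument (multiplying $\epss{\ve}_{xx}=W'(\ve)-\chempote(\ve)$ by $W'(\ve)$ and integrating by parts) does, after handling the contribution from $\{W''<0\}$ via the energy bound, produce a uniform bound on $\int_\toro W'(\ve)^2\,dx$. But this is only an $L^2$-bound on $W'(\ve)$. Even granting that $|W'(\var)|\to\infty$ as $|\var|\to\infty$ (which, incidentally, does \emph{not} follow from $W\in\mathcal C^2$, $W\to+\infty$, and $W$ nowhere affine---take $W$ asymptotically linear), an $L^2$-bound only tells you that the set $\{|\ve|>R\}$ has small measure, not that it is empty. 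To upgrade to a pointwise bound you would need uniform control on the oscillation of $\ve$, and as you yourself noted at the outset, the only oscillation bound available carries a factor $\eps^{-1}$. So the argument does not close.

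The paper sidesteps this with a one-line maximum-principle argument. Since $\ve\in H^3(\toro)\subset C^2(\toro)$ by Remark~\ref{pal}, at a maximum point $x_\eps^+$ of $\ve$ one has ${\ve}_{xx}(x_\eps^+)\le 0$, hence
\[
W'(\ve(x_\eps^+)) \;\le\; W'(\ve(x_\eps^+)) - \epss {\ve}_{xx}(x_\eps^+) \;=\; \chempote(\ve)(x_\eps^+) \;\le\; K,
\]
and symmetrically $W'(\ve(x_\eps^-))\ge -K$ at a minimum point. Since $W'$ is monotone increasing outside a compact set, these \emph{pointwise} bounds on $W'$ at the extrema of $\ve$ bound $\max\ve$ and $\min\ve$ directly; no integral estimate or oscillation argument is needed. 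Your compactness argument is fine and is essentially the paper's (compact embedding of $L^\infty(\toro)$, or equally $L^2(\toro)$, into $H^{-1}(\toro)$).
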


\begin{proof}
{}From Remark \ref{pal} we have $\ve \in H^3(\toro)$
and $\chempote(\ve)  \in H^1(\toro)$. 
Moreover \eqref{eq:unifenergy}
guarantees
\begin{equation}\label{stima}
\sup_{\eps \in (0,1]} \vert \grad \Fe\vert(\uestatic) = 
\sup_{\eps \in (0,1]} 
\Vert {\chempote(\uestatic)}_x
\Vert_{L^2(\toro)}
< +\infty.
\end{equation}
We claim that 
\begin{equation}\label{eq:veve}
\sup_{\eps \in (0,1]}\Vert \chempote(\ve)\Vert_{L^\infty(\toro)}
<+\infty.
\end{equation}
Using assumption \eqref{a1} on $W$ 
and the periodicity 
of $\ve$, it follows
$$
\Big\vert \int_{\toro} \chempote(\ve)~dx
\Big\vert
=
\Big\vert \int_{\toro} W'(\ve)~dx \Big\vert 
\leq 
C \int_{\toro} (1+ W(\ve))~dx,
$$
hence from \eqref{eq:unifenergy}
$$
\sup_{\eps \in (0,1]}\Big\vert \int_{\toro} \chempote(\ve)~dx
\Big\vert<+\infty.
$$
From this estimate and 
\eqref{stima}, claim \eqref{eq:veve} follows.

Let us now show that 
\begin{equation}\label{stimaWprimo}
\sup_{\eps \in (0,1]}\Vert W'(\uestatic)\Vert_{L^\infty(\toro)} < +\infty.
\end{equation}
Since $W'$ is monotone increasing out of a compact set (see Section \ref{subsec:thepot}), 
to show \eqref{stimaWprimo}
it is enough 
to check that 
\begin{equation}\label{stimaWprimomax}
\sup_{\eps \in (0,1]} W'(\uestatic(x_\eps^+)) < +\infty, 
\qquad
\sup_{\eps \in (0,1]}(-W'(\uestatic(x_\eps^-))) < +\infty, 
\end{equation}
where $x_\eps^\pm \in \toro$ are such that 
$$
\uestatic(x_\eps^+) = \max \{\uestatic(x) : x \in \toro\}, \qquad
\uestatic(x_\eps^-) = \min \{\uestatic(x): x \in \toro\}.
$$
We have, using 
${\uestatic}_{xx}(x_\eps^+)\leq 0$ and
${\uestatic}_{xx}(x_\eps^-)\geq 0$,
$$
\Vert \chempote(\uestatic)\Vert_{L^\infty(\toro)} \geq 
 \chempote(\uestatic(x_\eps^+)) \geq W'(\uestatic(x_\eps^+))
$$
and
$$
-\Vert \chempote(\uestatic)\Vert_{L^\infty(\toro)} \leq
 \chempote(\uestatic(x_\eps^-)) \leq W'(\uestatic(x_\eps^-)).
$$
Therefore, thanks to \eqref{eq:veve}, \eqref{stimaWprimomax} is proven,
and 
\eqref{stimainfinity} follows. 

\noindent
The last assertion follows from the compact embedding of $L^\infty(\toro)$ 
in $H^{-1}(\toro)$.
\end{proof}

In the next lemma we introduce a parametrized family $\mu$ of probability 
measures, associated with suitable sequences $(\uestatic)$, the so-called Young measures.
Let $\mc P(\mathbb R)$ be the set of
probability measures on $\mathbb R$. 
For $\lambda \in \mc P(\mathbb R)$ we let ${\rm spt}(\lambda)$ 
be the support of $\lambda$; moreover, 
if $f$ is a continuous function on $\bb R$, we let $\lambda(f) = 
\int_{\R} f~d\lambda$.
If $\lambda :\toro\ni x \mapsto \lambda_x \in \mathcal P(\R)$
is a parametrized family of probability measures, by $\lambda(f)$
we mean the function $\toro\ni x \mapsto \lambda_x(f) \in \R$.

\begin{lemma}[{\bf The measure $\mu$}]\label{l:mu}
Let $\ustatic \in \spazio$ and let
$(\uestatic) \subset \spazio$ be a sequence such that 
\begin{equation}\label{eq:uuuu}
\lim_{\eps
  \downarrow 0} \uestatic = \ustatic \qquad {\rm in}~ \spazio 
\end{equation}
 and satisfying \eqref{eq:unifenergy}.
Then 
there exists  a measurable
map 
$$
\mu: \toro\ni x \mapsto \mu_x \in \mc P(\mathbb R)
$$
for which the following properties hold:
\begin{itemize}
\item[(a)] there exists a constant $M>0$ such that 
$$
{\rm spt}(\mu_x) \subseteq [-M,M] \qquad {\rm for~a.e.}~ x \in \toro;
$$
\item[(b)] $\ustatic=\mu(\imath)$, where
$\imath$ is the identity
map on $\bb R$;
\item[(c)] there exists a subsequence $(\uekstatic)$
such that 
$$
\lim_{k \to +\infty}
\int_{\toro} f(\uekstatic) ~\varphi~dx =\int_{\toro} \mu(f)
~  \varphi~dx, \qquad
f \in \mathcal C^0(\mathbb R), 
~\varphi \in L^1(\toro);
$$
\item[(d)] $\mu(W') \in H^1(\toro)$, and 
$$
\displaystyle \lim_{k \to +\infty}
\chempotek(\uekstatic) 
= \mu(W') \quad {\rm
  weakly~ in~} H^1(\toro) \quad {\rm and~ strongly~ in~ } L^2(\toro).
$$
\end{itemize}
\end{lemma}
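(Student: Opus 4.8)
The plan is to combine the uniform $L^\infty$ bound of Lemma~\ref{lemmazero} with the fundamental theorem on Young measures, and then to identify the weak $H^1$ limit of $(\chempotek(\uekstatic))$ by letting the singular term $\eps_k^2(\uekstatic)_{xx}$ vanish.

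First I would apply Lemma~\ref{lemmazero}, whose hypothesis is precisely \eqref{eq:unifenergy}: this gives $M>0$ with $\|\uestatic\|_{L^\infty(\toro)}\le M$ for all $\eps\in(0,1]$. Since $(\uestatic)$ is thus bounded in $L^\infty(\toro)$, the fundamental theorem on Young measures yields a subsequence $(\uekstatic)$ and a measurable map $\mu\colon\toro\to\mc{P}(\R)$ such that $f(\uekstatic)\rightharpoonup\mu(f)$ weakly-$*$ in $L^\infty(\toro)$ for every $f\in\mathcal C^0(\R)$ (each $f(\uekstatic)$ being bounded in $L^\infty(\toro)$, as the $\uekstatic$ range in $[-M,M]$); testing against $\varphi\in L^1(\toro)$ gives~(c). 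Because each $\uekstatic$ takes values in $[-M,M]$, the generated measure is concentrated there: testing (c) with the bounded continuous function $f=\min\{\mathrm{dist}(\cdot,[-M,M]),1\}$, which vanishes exactly on $[-M,M]$, and with $\varphi\equiv1$ forces $\mu_x(f)=0$ for a.e.\ $x$, hence $\mathrm{spt}(\mu_x)\subseteq[-M,M]$ for a.e.\ $x$, which is~(a). For~(b) I would choose $f=\imath$ in~(c): then $\uekstatic\rightharpoonup\mu(\imath)$ weakly in $L^2(\toro)$, hence strongly in $H^{-1}(\toro)$ by the compact embedding $L^2(\toro)\hookrightarrow H^{-1}(\toro)$; comparing with $\uekstatic\to\ustatic$ in $\spazio$ and using uniqueness of limits gives $\ustatic=\mu(\imath)$.

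For~(d) I would use the bounds already obtained inside the proof of Lemma~\ref{lemmazero}, namely $\sup_k\|\chempotek(\uekstatic)\|_{L^\infty(\toro)}<+\infty$ and $\sup_k\|(\chempotek(\uekstatic))_x\|_{L^2(\toro)}=\sup_k|\grad\Fek|(\uekstatic)<+\infty$, so that $(\chempotek(\uekstatic))$ is bounded in $H^1(\toro)$. Passing to a further subsequence, $\chempotek(\uekstatic)\rightharpoonup g$ weakly in $H^1(\toro)$ for some $g\in H^1(\toro)$, and strongly in $L^2(\toro)$ by Rellich's theorem. To identify $g$, note that $\sup_k\Fek(\uekstatic)<+\infty$ forces $\|\eps_k(\uekstatic)_x\|_{L^2(\toro)}$ to be bounded, so $\eps_k^2(\uekstatic)_x=\eps_k\bigl(\eps_k(\uekstatic)_x\bigr)\to0$ in $L^2(\toro)$ and therefore $\eps_k^2(\uekstatic)_{xx}\to0$ in $H^{-1}(\toro)$; meanwhile $W'(\uekstatic)\rightharpoonup\mu(W')$ weakly in $L^2(\toro)$ by~(c), since $W'$ is continuous and $(\uekstatic)$ is bounded in $L^\infty$. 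Passing to the limit in $\chempotek(\uekstatic)=W'(\uekstatic)-\eps_k^2(\uekstatic)_{xx}$ in $H^{-1}(\toro)$ then gives $g=\mu(W')$; in particular $\mu(W')=g\in H^1(\toro)$, which proves~(d). Relabelling the twice-extracted subsequence as $(\uekstatic)$, all of (a)--(d) hold along it.

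I do not expect a genuine obstacle here: the hard analytic input --- the uniform $L^\infty$ bounds on $\uestatic$ and on $\chempote(\uestatic)$ --- is already supplied by Lemma~\ref{lemmazero}. The only points needing a little care are keeping the various weak limits consistent along one fixed subsequence, and the standard fact that an $L^\infty$-bounded sequence generates a Young measure concentrated on the (closure of the) range of the sequence.
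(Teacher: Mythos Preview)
Your proposal is correct and follows essentially the same route as the paper: use the $L^\infty$ bound from Lemma~\ref{lemmazero} to produce the Young measure and verify (a)--(c), then use the $H^1$ bound on $\chempote(\uestatic)$ (also from Lemma~\ref{lemmazero}) together with Rellich to obtain (d). Your identification of the weak $H^1$ limit as $\mu(W')$ --- by splitting $\chempotek(\uekstatic)=W'(\uekstatic)-\eps_k^2(\uekstatic)_{xx}$ and showing the second term vanishes in $H^{-1}$ --- is in fact more explicit than the paper's one-line appeal to distributional convergence, and your treatment of (b) (passing from weak-$*$ $L^\infty$ to strong $H^{-1}$ via compactness before comparing with \eqref{eq:uuuu}) fills in a step the paper leaves implicit.
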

\begin{proof}
By Lemma \ref{lemmazero} we have 
\begin{equation}\label{M}
M:= \sup_{\eps \in (0,1]} \Vert \uestatic\Vert_{L^\infty(\toro)} < +\infty.
\end{equation}
Therefore there exists a (not relabeled) subsequence 
such that $\delta_{\uestatic(x)}\otimes dx$ converges
 to $\mu_x \otimes dx$ weakly$^*$ in the space of
measures on $\toro\times
  \bb R$, where $\mu_x \in \mathcal P(\R)$ 
for almost every $x \in \toro$, hence (c) holds for all 
continuous $\varphi$. Being the sequence $(f(\uestatic))$ 
bounded in $L^\infty(\toro)$, the convergence 
holds for any $\varphi \in L^1(\toro)$, and this proves (c).

Since all measures
$\delta_{\uestatic(x)}$ have support in $[-M, M]$ also $\mu_x$ has 
support in $[-M, M]$, which gives (a). Assertion (b) follows 
by taking $f = \imath$ in (c).

%The uniform bound \eqref{eq:unifenergy} implies in particular that 
{}From Remark 
\ref{pal} and the proof 
of Lemma \ref{lemmazero}, it follows that 
the sequence $(\chempote(\uestatic))$
is bounded in $L^{2}(\toro)$. 
The uniform bound \eqref{eq:unifenergy}
then implies
\begin{equation}\label{dora}
\sup_{\eps \in (0,1]} 
\Vert \chempote(\uestatic)\Vert_{H^1(\toro)} < +\infty. 
\end{equation}
Hence there exists a
(not relabeled) subsequence along which $\chempote(\uestatic)$ 
converge weakly in 
$H^{1}(\toro)$ and strongly in $L^2(\toro)$. On the other hand, $\chempote(\ue)$ converges
to $W'(\ustatic)$ in the sense of distributions on $\toro$. 
By uniqueness of the
  limit, assertion (d) follows.
\end{proof}

The meaning of the next proposition is better illustrated by 
the subsequent Corollary \ref{cor:int} where the assumptions allow, roughly 
speaking, to locally
choose $l = W'$.
\begin{proposition}\label{c:mucorr}
Let $(
\uestatic
)$ and $\mu$ be as in Lemma \ref{l:mu}. 
Let $l \in \mathcal C^0(\bb R)$ be nondecreasing. Then
\begin{equation}\label{eq:mauro}
\mu(l W') \le \mu(l) \mu(W')<+\infty.
\end{equation}
\end{proposition}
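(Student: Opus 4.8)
The plan is to never invoke $W'$ directly, but to use the decomposition $W'(\uekstatic)=\chempotek(\uekstatic)+\eps_k^2(\uekstatic)_{xx}$ coming from the definition of $\chempotek$, and to exploit that, along the subsequence of Lemma~\ref{l:mu}, $\chempotek(\uekstatic)$ converges strongly in $L^2(\toro)$ to $m:=\mu(W')\in H^1(\toro)$ (Lemma~\ref{l:mu}(d)), while the contribution of $\eps_k^2(\uekstatic)_{xx}$, tested against $l(\uekstatic)\varphi$ with $l$ nondecreasing and $\varphi\ge0$, will carry a favourable sign after one integration by parts. Since $\mu_x$ is supported in $[-M,M]$ for a.e.\ $x$ by Lemma~\ref{l:mu}(a), both $\mu(lW')$ and $\mu(l)\mu(W')$ are automatically bounded (by $\|lW'\|_{L^\infty([-M,M])}$, resp.\ $\|l\|_{L^\infty([-M,M])}\|W'\|_{L^\infty([-M,M])}$), so finiteness in \eqref{eq:mauro} needs no argument, and it suffices to prove $\int_\toro\mu(lW')\,\varphi\,dx\le\int_\toro\mu(l)\,\mu(W')\,\varphi\,dx$ for every $\varphi\in\mathcal C^\infty(\toro)$ with $\varphi\ge0$.

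\textbf{Reduction to smooth $l$.} First I would assume $l\in\mathcal C^1(\R)$ with $l'\ge0$, so that the integration by parts below is legitimate. The general continuous nondecreasing case then follows by approximation: take mollifications $l_n:=l*\rho_n$, which are smooth and still nondecreasing (their derivative $l'*\rho_n$ is the convolution of the nonnegative measure $l'$ with a nonnegative kernel) and which converge to $l$ uniformly on $[-M,M]$; then $l_nW'\to lW'$ and $l_n\to l$ uniformly on $[-M,M]$, hence $\mu(l_nW')\to\mu(lW')$ and $\mu(l_n)\to\mu(l)$ a.e.\ on $\toro$ because $\mu_x$ lives in $[-M,M]$, and the inequality passes to the limit.

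\textbf{The computation.} For $l\in\mathcal C^1(\R)$ nondecreasing, fix $\varphi\in\mathcal C^\infty(\toro)$, $\varphi\ge0$, and work along the subsequence $(\uekstatic)$ of Lemma~\ref{l:mu}. Since $\uekstatic\in H^3(\toro)$ by Remark~\ref{pal}, integrating by parts on the torus gives
\[
\int_\toro l(\uekstatic)\,W'(\uekstatic)\,\varphi\,dx
=\int_\toro l(\uekstatic)\,\chempotek(\uekstatic)\,\varphi\,dx
-\eps_k^2\!\int_\toro l'(\uekstatic)\,(\uekstatic)_x^2\,\varphi\,dx
-\eps_k^2\!\int_\toro l(\uekstatic)\,(\uekstatic)_x\,\varphi_x\,dx .
\]
I would then analyze the three terms. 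The second one is $\le0$ since $l'\ge0$ and $\varphi\ge0$ — this is the only place monotonicity of $l$ is used. With $C_0:=\sup_{\eps\in(0,1]}\Fe(\uestatic)<+\infty$, so that $\eps_k^2\|(\uekstatic)_x\|_{L^2(\toro)}^2\le2C_0$, and with $\|(\uekstatic)_x\|_{L^1(\toro)}\le\|(\uekstatic)_x\|_{L^2(\toro)}$ on the unit torus, the third term is bounded in modulus by $\eps_k\,\|l\|_{L^\infty([-M,M])}\,\|\varphi_x\|_{L^\infty}\,(2C_0)^{1/2}\to0$. For the first term, writing $\chempotek(\uekstatic)=(\chempotek(\uekstatic)-m)+m$: the piece with $\chempotek(\uekstatic)-m$ is $\le\|l\|_{L^\infty([-M,M])}\|\varphi\|_{L^\infty}\|\chempotek(\uekstatic)-m\|_{L^2(\toro)}\to0$ by Lemma~\ref{l:mu}(d), while Lemma~\ref{l:mu}(c) applied with the $L^1(\toro)$ test function $m\varphi$ gives $\int_\toro l(\uekstatic)\,m\,\varphi\,dx\to\int_\toro\mu(l)\,m\,\varphi\,dx$. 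Passing to $\limsup_k$, and using Lemma~\ref{l:mu}(c) once more on the left-hand side with $f=lW'\in\mathcal C^0(\R)$ and test function $\varphi$, I obtain $\int_\toro\mu(lW')\,\varphi\,dx\le\int_\toro\mu(l)\,\mu(W')\,\varphi\,dx$; letting $\varphi$ vary among nonnegative smooth functions and recalling $\mu(lW')\in L^1(\toro)$ yields \eqref{eq:mauro}.

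\textbf{Expected obstacle.} There is no deep difficulty here; the substantive point is recognizing that the only a priori dangerous term, $\eps_k^2\int_\toro l(\uekstatic)(\uekstatic)_{xx}\varphi\,dx$, becomes manifestly non-positive up to a vanishing remainder after a single integration by parts, precisely because $l$ is nondecreasing and $\varphi\ge0$; everything else is routine use of the energy bound and of the strong $L^2$ convergence of the chemical potential from Lemma~\ref{l:mu}. The one place needing care is regularity bookkeeping: $l$ is only assumed continuous, so one must first smooth it, which is harmless thanks to the uniform support bound on the Young measure $\mu$.
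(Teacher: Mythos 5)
Your proof is correct and follows essentially the same route as the paper: the same integration by parts turning the $\epss\,\uestatic_{xx}$ term into a non-positive contribution (by monotonicity of $l$ and $\varphi\ge 0$) plus a remainder vanishing by the energy bound, and the same use of Lemma \ref{l:mu}(c),(d) to pass to the limit, your splitting $\chempotek(\uekstatic)=(\chempotek(\uekstatic)-\mu(W'))+\mu(W')$ being only a mild variant of the paper's $H^{-1}$--$H^{1}$ duality estimate for the product $l(\uestatic)\,\chempote(\uestatic)$. Your explicit mollification of $l$ is a welcome extra precaution, since the paper uses $l'(\uestatic)$ in \eqref{eq:usconi} without comment although $l$ is only assumed continuous.
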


\begin{proof}
  Since
$l$ is continuous, 
from Lemma \ref{lemmazero} it follows that
the sequence $(l(\uestatic))$ is bounded in $L^\infty(\toro)$. 
Using Lemma \ref{l:mu} (c),  possibly 
passing to a (not relabeled) subsequence, we have that 
$l(\uestatic)$ converge to $\mu(l)$
  weakly$^*$ in $L^\infty(\toro)$ and strongly in $H^{-1}(\toro)$.
  Then
$$
\begin{aligned}
& \int_{\toro} \vert 
l(\uestatic) \chempote(\uestatic) - \mu(l) \mu(W')
\vert~dx 
\\
\leq &
\int_{\toro} \vert 
(l(\uestatic) - \mu(l))  \chempote(\uestatic) \vert ~dx + 
\int_{\toro} 
\vert \mu(l)(\chempote(\uestatic) -  \mu(W')\vert
~dx 
\\
\leq & 
\Vert l(\uestatic) - \mu(l)\Vert_{H^{-1}(\toro)}
\Vert \chempote(\uestatic)\Vert_{H^1(\toro)}
+ \Vert \mu(l)\Vert_{L^2(\toro)} 
\Vert \chempote(\uestatic) - \mu(W')\Vert_{L^2(\toro)}.
\end{aligned}
$$
Hence, recalling \eqref{dora} and Lemma \ref{l:mu} (d), it follows that
 $l(\uestatic) \chempote(\uestatic)$ converge to $\mu(l) \mu(W')$ 
in $L^1(\toro)$ as $\eps \downarrow 0$.

\noindent 
On the other hand, for all $\varphi \in {\mathcal C}^1(\toro; [0,+\infty))$, 
integrating by parts and using the fact that $l$ is nondecreasing,
\begin{equation}\label{eq:usconi}
\begin{split}
& \int_{\toro} l(\uestatic)~ \chempote(\uestatic)
~\varphi ~dx 
\\ 
= &  \int_{\toro} l(\uestatic)~ W'(\uestatic) ~\varphi ~dx 
         + \epss \int_{\toro} l'(\uestatic)~  ({\uestatic}_x)^2 
~              \varphi ~dx  +  \epss 
 \int_{\toro} l(\uestatic)  ~{\uestatic}_x ~ \varphi_x~dx
\\
\geq &  \int_{\toro} l(\uestatic)~ W'(\uestatic) \varphi \,dx 
 +  \epss
 \int_{\toro} l(\uestatic)  ~{\uestatic}_x~  \varphi_x~dx.
\end{split}
\end{equation}
{}From the uniform bound \eqref{eq:unifenergy} and 
Cauchy-Schwarz's inequality, it follows that the last term
on the right hand side of \eqref{eq:usconi} 
vanishes as $\eps \downarrow 0$.
On the other hand, applying Lemma \ref{l:mu} (c) with the choice
$f = l W'$, we deduce that 
$$
\int_{\toro} 
l(\uestatic) ~ W'(\uestatic)~ \varphi ~dx \to 
\int_{\toro} \mu(l W') ~\varphi~dx.
$$
We conclude 
$$
\int_{\toro} \mu(l) ~\mu(W')~ \varphi ~dx \geq 
\int_{\toro} \mu(l W')~\varphi~dx.
$$
\end{proof}

As a consequence of Proposition \ref{c:mucorr} 
we have the following result which, roughly speaking,
says that the oscillations of a sequence $(\uestatic)$ satisfying 
\eqref{eq:unifenergy}, if contained in 
a connected component of 
$
\R \setminus \lus,
$
namely in an interval where $W'$ is monotone,  are damped down.
This result should be considered together with Lemma \ref{l:youngdelta}
of Section \ref{sec:newprel}, which gives further informations
on $\mu_{x}({W^{**}}')$.

\begin{corollary}[{\bf Support of $\mu_x$, I}]\label{cor:int}
Let $\mu$ be as in Lemma \ref{l:mu}.
For almost every $x \in \toro$ for which
 ${\rm spt}(\mu_x)$ is contained 
in a connected component of $\R \setminus \lus$,
we have that 
$\mu_x$ is a Dirac delta.
\end{corollary}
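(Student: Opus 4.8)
The plan is to exploit Proposition \ref{c:mucorr} with a cleverly chosen nondecreasing $l$ built from $W'$, together with the fact (assumption ii) that $W$ is never affine on an interval. Fix a connected component $I$ of $\R\setminus\lus$ and a point $x$ such that ${\rm spt}(\mu_x)\subseteq I$; since $W''\ge 0$ on $I$, the restriction $W'|_I$ is nondecreasing. The first step is to produce a globally nondecreasing continuous function $l$ on $\R$ that agrees with $W'$ on a neighborhood of ${\rm spt}(\mu_x)$ inside $I$ — e.g. take $l$ to be $W'$ truncated (constant) outside a slightly larger compact subinterval of $I$; this is legitimate because $W'$ is nondecreasing precisely on $I$. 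For this $l$ we then have, $\mu_x$-almost surely, $l=W'$, hence $\mu_x(lW')=\mu_x((W')^2)$ and $\mu_x(l)=\mu_x(W')$, so \eqref{eq:mauro} yields
\begin{equation*}
\mu_x\big((W')^2\big)\ \le\ \big(\mu_x(W')\big)^2 .
\end{equation*}

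The second step is the elementary observation that this is the reverse of the Cauchy--Schwarz (Jensen) inequality: for any probability measure one always has $\mu_x((W')^2)\ge(\mu_x(W'))^2$, with equality if and only if $W'$ is $\mu_x$-a.s.\ constant. Therefore $W'$ is constant on ${\rm spt}(\mu_x)$. Now invoke assumption ii): since $W$ is $\mathcal C^2$ and not affine on any interval, $W'$ cannot be constant on any nondegenerate interval; more to the point, the level set $\{W'=c\}$ has empty interior, and since $W''\ge 0$ on $I$, $W'|_I$ is either strictly increasing or constant on subintervals — a genuine flat piece of $W'$ inside $I$ would force $W$ affine there, contradiction. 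Hence $\{W'=c\}\cap I$ is a single point, which forces ${\rm spt}(\mu_x)$ to be a single point, i.e.\ $\mu_x=\delta_{\ustatic(x)}$ by part (b) of Lemma \ref{l:mu}.

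One technical caveat to handle carefully: the argument above is pointwise in $x$, but the truncation interval defining $l$ depends on ${\rm spt}(\mu_x)$, hence on $x$, whereas Proposition \ref{c:mucorr} is stated for a fixed $l$. The clean way around this is to work with a countable family of test functions: enumerate the components of $\R\setminus\lus$ (there are finitely many, by assumption iii) and for each one exhaust it by a countable increasing family of compact subintervals $K_n$, defining $l_n$ to equal $W'$ on $K_n$ and to be constant outside. Apply Proposition \ref{c:mucorr} to each $l_n$, getting a single null set off which \eqref{eq:mauro} holds simultaneously for all $n$. For a.e.\ $x$ with ${\rm spt}(\mu_x)\subseteq I$, compactness of ${\rm spt}(\mu_x)$ gives some $n$ with ${\rm spt}(\mu_x)\subseteq {\rm int}\,K_n$, and the displayed reverse-Jensen inequality applies with $l=l_n$. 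The main obstacle is thus not deep: it is the bookkeeping needed to pass from the fixed-$l$ statement of Proposition \ref{c:mucorr} to the $x$-dependent choice, plus the measurability/null-set care that lets one quantify ``for almost every $x$'' after the countable intersection; the analytic heart — reverse Cauchy--Schwarz forcing a Dirac mass, and non-affineness ruling out a nondegenerate support — is short.
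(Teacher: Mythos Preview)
Your proof is correct and follows essentially the same route as the paper's: pick a nondecreasing continuous $l$ equal to $W'$ on the relevant monotonicity interval, apply Proposition~\ref{c:mucorr} to get the reverse Cauchy--Schwarz inequality $\mu_x((W')^2)\le(\mu_x(W'))^2$, and conclude via assumption~ii) that $\mu_x$ is a Dirac. Two minor remarks: first, the exhaustion by compact subintervals $K_n$ is unnecessary --- since $W'$ is nondecreasing on the \emph{whole} component $I$, a single $l$ (equal to $W'$ on $I$ and extended by constants outside) works for every $x$ with ${\rm spt}(\mu_x)\subseteq I$, which is exactly what the paper does; second, assumption~iii) concerns $\gus$, not $\lus$, so the components of $\R\setminus\lus$ need not be finitely many, but the nondegenerate ones are at most countable, which is all you need.
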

\begin{proof}
Since the intervals where $W'$ is strictly monotone are 
at most countable, 
we can fix an interval $I$ where $W'$ 
is strictly increasing, and suppose that there exists a set
$A \subseteq \toro$ of positive measure so that for almost every
$x \in A$ the support of $\mu_x$ is contained in $I$.
Choose now a nondecreasing continuous function
$l$ so that $l=W'$ in $I$. 
Then from \eqref{eq:mauro} it follows 
$$
\mu_x(W'^2)\le (\mu_x(W'))^2 \qquad {\rm a.e.}~ x \in A,
$$
which is a reverse Cauchy-Schwarz inequality. It follows that 
$W'$ is constant $\mu_x$-almost everywhere in $A$, and the thesis 
follows recalling that, by assumption, $W$ is not affine in any interval. 
\end{proof}

%%%%%%%%%%%%%%%%%%%%%%%%%%%%%%%%%%%%%%%%%%%%%%%%%%%%%%%%%%%%%%%%%%%%%
\section{Localization of oscillations}\label{sec:newprel}
%%%%%%%%%%%%%%%%%%%%%%%%%%%%%%%%%%%%%%%%%%%%%%%%%%%%%%%%%%%%%%%%%%%%%
The information gained from the results of the previous section, and in particular 
from Corollary \ref{cor:int},  are not enough to conclude the proof of 
Theorem \ref{p:gammaslope}. 
Our aim now (see Lemma \ref{l:youngdelta}) 
is to prove that for almost
every $x \in \toro$,
either $\mu_x$ is a Dirac delta 
or its support is contained in the closure of a connected component of 
$\gus$. 
The following result, heavily relying on the one-dimensional setting, 
is the crucial step toward the proof of this assertion.

%All results of this section are independent of time.
For any $\rho >0$ define 
$$
\gusrho := \{\var \in \R : {\rm dist}(\var, \gus) < \rho\}.
$$
\begin{lemma}[{\bf Localization of oscillations, I}]\label{lemmamatteo}
Let $\uestatic \in \spazio$ and $c \in (0,+\infty)$ be such that 
\begin{equation}\label{eq:unifenergylemma}
\Fe(\uestatic) + \vert
\nabla \Fe\vert(\uestatic) \leq c, \qquad \eps \in (0,1].
\end{equation}
For any $\epseps >0$ there exists
$\delta = \delta(\epseps,c)>0$, depending on $\epseps$ and $c$,
but independent of $\eps$,
such that for any pair $\xe \in \toro$, $\ye \in \toro$
of points satisfying the properties 
\begin{itemize}
\item[(i)] $0 < \ye-\xe \leq \delta$,
\item[(ii)] ${\uestatic}_x(\xe) = 
{\uestatic}_x(\ye) = 0$,
\end{itemize}
we have either 
\begin{equation}\label{ceci}
\uestatic(z) \in  \guse, \qquad z \in [\xe,\ye]
\end{equation}
or 
\begin{equation}\label{secondaposs}
\vert \uestatic(\ye) - \uestatic(\xe)\vert < \epseps.
\end{equation}
\end{lemma}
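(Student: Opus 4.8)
The plan is to argue by contradiction, using the energy bound \eqref{eq:unifenergylemma} to control the geometry of $\uestatic$ on the interval $[\xe,\ye]$. Suppose the statement fails for some $\epseps>0$: then for every $\delta>0$ there is an $\eps$ (in fact along a sequence $\eps_j \downarrow 0$, after relabeling $\delta_j \downarrow 0$) and points $\xe,\ye$ with $0<\ye-\xe\le\delta_j$, ${\uestatic}_x(\xe)={\uestatic}_x(\ye)=0$, such that \eqref{ceci} is violated (some $z_j\in[\xe,\ye]$ has $\uestatic(z_j)\notin\guse$) and \eqref{secondaposs} is violated ($|\uestatic(\ye)-\uestatic(\xe)|\ge\epseps$). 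The key quantities to track on $[\xe,\ye]$ are the local energy $\int_{\xe}^{\ye}\big(\epss{\uestatic}_x^2/2+W(\uestatic)\big)\,dx$, which is bounded by $c$, and the variation of $\uestatic$, which by Cauchy–Schwarz satisfies $|\uestatic(\ye)-\uestatic(\xe)|^2\le (\ye-\xe)\int_{\xe}^{\ye}{\uestatic}_x^2\,dx$.

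The heart of the argument is a lower bound on the energy contained in a "traverse" of a region where $W-W^{**}$ is strictly positive, in the spirit of the classical Modica–Mortola construction. Because \eqref{ceci} fails, $\uestatic$ takes at $z_j$ a value outside the closed $\eta$-neighborhood $\overline{\guse}$ of $\gus$, while because \eqref{secondaposs} fails the endpoint values differ by at least $\epseps$; combining these with the conditions ${\uestatic}_x(\xe)={\uestatic}_x(\ye)=0$ and the intermediate value theorem, the graph of $\uestatic$ over $[\xe,\ye]$ must cross a fixed compact "corridor" on which $W>W^{**}+\kappa$ for some $\kappa=\kappa(\eta,\epseps)>0$, since $W=W^{**}$ precisely on $\R\setminus\gus$ and $W-W^{**}$ is continuous and strictly positive on compact subsets of $\gus$. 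On that corridor the pointwise inequality $\epss{\uestatic}_x^2/2 + W(\uestatic) \ge \epss{\uestatic}_x^2/2 + W^{**}(\uestatic) + \kappa\,\mathbf{1}_{\{\uestatic\in K\}}$ holds; integrating and using $\int W^{**}(\uestatic)\ge W^{**}(m)$ together with Jensen does not suffice by itself, so instead I would extract from the length of the corridor a lower bound of the form $\int_{\xe}^{\ye}\big(\epss{\uestatic}_x^2/2+W(\uestatic)\big)dx \ge \kappa\,\mathrm{meas}\{z\in[\xe,\ye]:\uestatic(z)\in K\}$, and separately bound $\mathrm{meas}\{z:\uestatic(z)\in K\}$ from below in terms of the slope. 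This is where the real work lies.

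The mechanism that produces the contradiction is the interplay between the two scales $\eps$ and $\delta$. On one hand, if $\uestatic$ moves by a definite amount ($\ge\epseps$, or merely reaches outside $\overline{\guse}$ and comes back, given ${\uestatic}_x=0$ at both endpoints) over an interval of length $\le\delta$, then either ${\uestatic}_x$ is large somewhere — forcing $\epss\int{\uestatic}_x^2$ to be large unless $\eps$ is very small relative to $\delta$ — or $\uestatic$ spends a proportionally large portion of $[\xe,\ye]$ inside the corridor $K$, forcing $\int W(\uestatic)$ to exceed $\int W^{**}(\uestatic)$ by at least $\kappa\,\mathrm{meas}(\cdot)$. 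Either way one should be able to show that the total energy on $[\xe,\ye]$, and hence $\Fe(\uestatic)$ minus the unavoidable baseline $\F(\ustatic)$-type contribution, is bounded below by a positive constant that depends only on $\eta$ and $c$ but that can be made to exceed $c$ by choosing the number of disjoint such intervals large; equivalently, one shows a single traverse costs at least $g(\eta)>0$ of energy \emph{in excess of} $\int W^{**}$, and then, since $\int_{\toro} W^{**}(\uestatic)\,dx$ is itself bounded below (by $W^{**}(m)$, via Jensen, using $\int\uestatic\,dx=m$), we contradict $\Fe(\uestatic)\le c$ once $\delta$ is small enough that many traverses are forced — but more simply: if \eqref{ceci} and \eqref{secondaposs} both fail on \emph{one} interval of length $\le\delta$, the excess energy on that interval is $\ge g(\eta,\epseps)$ regardless of $\delta$, and the point of the statement is merely that such a configuration is possible only if $\eps\gtrsim\delta$; choosing $\delta$ small forces the first alternative \eqref{ceci}. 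I would make this precise by the bound $\int_{\xe}^{\ye}\epss {\uestatic}_x^2 \, dx \ge \epss\,\frac{|\uestatic(\ye)-\uestatic(\xe)|^2}{\ye-\xe}$ is too weak, so instead one uses the standard optimal-profile estimate: $\int_{\xe}^{\ye}\sqrt{2 W(\uestatic)}\,|{\uestatic}_x|\,dx \le \int_{\xe}^{\ye}\big(\tfrac{\epss}{2}{\uestatic}_x^2/\eps + \eps^{-1}W(\uestatic)\big)\cdot(\cdots)$; the clean version is $\int_{\xe}^{\ye}\big(\epss{\uestatic}_x^2/2+W(\uestatic)\big)dx \ge \eps\int_{\xe}^{\ye}\sqrt{2W(\uestatic)}\,|{\uestatic}_x|\,dx = \eps\,\Phi(\cdots)$ where $\Phi(t)=\int_0^t\sqrt{2W(s)}\,ds$, so that crossing the corridor costs $\ge \eps\cdot(\text{positive constant})$; but we want a lower bound \emph{not} vanishing with $\eps$, so the relevant contribution must come from the $\int W(\uestatic)$ term directly: because $\uestatic$ is $\tfrac12$-Hölder-like with constant $\big(\int{\uestatic}_x^2\big)^{1/2}\le(2c/\epss)^{1/2}$, the time $\uestatic$ spends in the corridor $K$ has measure $\ge c_1(\eta)\,\eps$ whenever $\uestatic$ actually crosses it, giving $\int_{\xe}^{\ye} (W-W^{**})(\uestatic)\,dx \ge \kappa c_1(\eta)\eps$ — still $O(\eps)$.

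I should therefore reorganize: the correct target is to show that if the excess energy $\int_{\xe}^{\ye}(W-W^{**})(\uestatic)\,dx$ is small — which it must be, since the \emph{global} excess $\Fe(\uestatic)-\F(\uestatic)\le c - W^{**}(m)$ is bounded and one can assume (by summing over a maximal disjoint family of such bad intervals, whose number is then bounded) that on a given bad interval the excess is as small as we like for $\delta$ small — then $\uestatic$ cannot cross the corridor $K\subset\gus$, because crossing $K$ forces excess energy bounded below independently of $\eps$ and of the interval length. The latter is the genuinely one-dimensional point: on $[\xe,\ye]$, $\uestatic$ starts and ends with zero derivative; if it leaves $\overline\guse$ and the endpoints are at distance $\ge\epseps$ or it re-enters, a connectedness/intermediate-value argument on $[\xe,\ye]$ combined with the structure of $\gus$ (finitely many components, $W=W^{**}$ outside) pins the graph into a region where, by the convexity of $W^{**}$ and strict positivity of $W-W^{**}$ there, the Modica–Mortola-type functional has a strictly positive $\eps$-independent lower bound — concretely, one exhibits two levels $a<b$ in the corridor with $\{z:\uestatic(z)\in(a,b)\}$ of positive measure on which $W(\uestatic)-W^{**}(\uestatic)\ge\kappa$, and a complementary estimate showing ${\uestatic}_x$ is not too large on a definite fraction, forcing $\int(W-W^{**})(\uestatic)\ge$ const. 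The main obstacle, and the step I expect to occupy the bulk of the proof, is precisely this last quantitative claim — extracting an $\eps$-independent lower bound on the excess energy from the geometric crossing condition, handling the interaction between the finitely many components $\Sigma_1,\dots,\Sigma_\ell$ of $\gus$ and the $\eta$-collar, and ruling out the degenerate possibility that $\uestatic$ crosses $\gus$ while staying energetically cheap by being very flat (which is exactly what the condition $\delta$ small, via $\int{\uestatic}_x^2\ge |\uestatic(\ye)-\uestatic(\xe)|^2/\delta$ and $\epss\int{\uestatic}_x^2\le 2c$, precludes once one also knows $\uestatic$ must traverse a corridor of definite width). Once that lemma-within-the-lemma is in place, choosing $\delta=\delta(\eta,c)$ so small that the unavoidable excess on a single bad interval exceeds the available global excess budget $c-W^{**}(m)$ closes the contradiction and yields \eqref{ceci}.
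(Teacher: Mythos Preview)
Your approach has a genuine gap: you never use the slope bound $|\nabla\Fe|(\uestatic)\le c$, and the statement cannot be obtained from the energy bound $\Fe(\uestatic)\le c$ alone. Every estimate you attempt (Modica--Mortola, excess energy $\int(W-W^{**})(\uestatic)$, time spent in a corridor) yields a lower bound that scales like $\eps$ and therefore vanishes; you noticed this yourself twice, and the attempted fix via ``many traverses'' does not apply, since the hypothesis gives a \emph{single} bad interval. Indeed one can produce a profile that transitions from a point outside $\overline{\gus^\eta}$ across a full component of $\gus$, has zero derivative at the endpoints, lives on an interval of length $\delta$, and satisfies $\Fe\le c$ as long as $\eps^2\lesssim\delta$; so nothing prevents both \eqref{ceci} and \eqref{secondaposs} from failing if you ignore the slope bound.

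The paper's proof is built on the chemical potential $\chempote(\uestatic)=W'(\uestatic)-\epss(\uestatic)_{xx}$, whose $x$-derivative has $L^2$ norm equal to $|\nabla\Fe|(\uestatic)$ and is therefore uniformly bounded. Multiplying $\chempote(\uestatic)$ by $(\uestatic)_x$ and integrating from $\xe$ to $z$, then using $(\uestatic)_x(\xe)=0$, gives
\[
W(\uestatic(z))-W(\uestatic(\xe))-\tfrac{\epss}{2}((\uestatic)_x(z))^2=\int_{\xe}^{z}\chempote(\uestatic)(\uestatic)_x\,dx .
\]
Because $\|(\chempote(\uestatic))_x\|_{L^2}\le c$, the function $\chempote(\uestatic)$ is nearly constant on $[\xe,\ye]$, and one shows the right-hand side equals $\chempote(\uestatic(\xe))\bigl(\uestatic(z)-\uestatic(\xe)\bigr)+O((\ye-\xe)^{1/2})$, the error depending only on $c$. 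At $z=\ye$ the derivative term vanishes and one gets an equality; if $|\uestatic(\ye)-\uestatic(\xe)|\ge\eta$ this forces $\chempote(\uestatic(\xe))$ to be essentially the secant slope of $W$ between $\uestatic(\xe)$ and $\uestatic(\ye)$. Substituting back, $W(\uestatic(z))$ lies above the secant line through $(\uestatic(\xe),W(\uestatic(\xe)))$ and $(\uestatic(\ye),W(\uestatic(\ye)))$ up to an error $O((\ye-\xe)^{1/2}/\eta)$. But if $[\uestatic(\xe),\uestatic(\ye)]$ is not contained in $\gus^\eta$, then by definition of $\gus$ and the assumption that $W$ is nowhere affine, the graph of $W$ dips strictly \emph{below} that secant by a positive amount $\omega(\eta)$ depending only on $\eta$ and the $L^\infty$ bound from Lemma~\ref{lemmazero}. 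Choosing $\delta$ so that the error is smaller than $\omega(\eta)/2$ yields the contradiction. The mechanism is thus not an energy--excess argument but an approximate concavity inequality for $W$ forced by the near-constancy of the chemical potential between critical points.
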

\begin{Remark}\label{rem:g}\rm
Before proving Lemma \ref{lemmamatteo}, some comments are in order. 
First of all remember that \eqref{eq:unifenergylemma} implies
(Remark \ref{pal}) that $\uestatic
\in H^3(\toro)$, and therefore $\uestatic$ are 
H\"older continuous (in particular uniformly continuous).
This fact, provided we assume $0< \ye-\xe \leq \delta$,
 does not imply inequality \eqref{secondaposs}, since $\epseps$ is 
required not to depend on $\eps$. The second observation concerns the meaning
of Lemma \ref{lemmamatteo}: this lemma states, roughly
speaking,  that between two stationary points
the functions
$\uestatic$
either have
a small oscillation, or they must be close to the set $\gus$
of the $\eps$-independent quantity $\epseps$. In some sense,
if $\uestatic$ have a sufficiently
large excursion between two critical points, their  values
cannot lie inside the region where $W$ is convex. 
%Namely,
%under assumption \eqref{eq:unifenergylemma}, oscillations of $\uestatic$ 
%are damped down where $W$ is convex.
Finally, the qualitative behavior of $\delta$ in dependence of $\epseps$
is explicit to a certain extent, see \eqref{contra} below. 
\end{Remark}

\begin{proof}
Fix $\epseps>0$, and let $\xe,\ye \in \toro$ be such that $0 < \ye-\xe$ and 
${\uestatic}_x(\xe) = {\uestatic}_x(\ye)=0$.
For simplicity of notation, in the sequel
of the proof we skip the dependence on $\eps$ of $\xe$
and $\ye$, thus we set $x=\xe$ and $y=\ye$.

Take a point  
$$
z \in [x,y].
$$
We have  
\begin{equation}\label{perdi}
\begin{aligned}
\int_x^z \chempote(\uestatic)~ {\uestatic}_x ~dx= 
&
\int_x^z \left(W'(\uestatic) - {\epss \uestatic}_{xx}\right)~ {\uestatic}_x ~dx
\\
= &  
~ W(\uestatic(z)) - W_\eps(\uestatic(x))
- \frac{\epss}{2} ({\uestatic}_x(z))^2
\\
\leq &
~W(\uestatic(z)) - W_\eps(\uestatic(x)),
\end{aligned}
\end{equation}
and moreover 
\begin{equation}\label{lire}
\int_x^y \chempote(\uestatic) ~{\uestatic}_x ~dx= 
W(\uestatic(y)) - W_\eps(\uestatic(x)).
\end{equation}
On the other hand, 
integrating by parts we have 
$$
\int_x^z \chempote(\ue) ~{\uestatic}_x ~dx =  
- \int_x^z {\chempote(\uestatic)}_x 
~\uestatic~dx + [\chempote(\uestatic) ~\uestatic]^z_x.
$$
Using \eqref{stimainfinity} and 
\eqref{stima}, and recalling assumption \eqref{eq:unifenergylemma},  we have
$$
-\int_x^z {\chempote(\uestatic)}_x ~\uestatic ~dx=  
O\left((z-x)^{1/2}\right),
$$
where $O$ is independent of $\eps$ (while $x$, $y$ and hence
also $z$, depend on $\eps$), so that 
\begin{equation}\label{lireli}
\int_x^z \chempote(\uestatic) ~{\uestatic}_x ~dx= 
[\chempote(\uestatic)~\uestatic]_x^z +
O\left((z-x)^{1/2}\right).
\end{equation}

On the other hand,
using again \eqref{stima}, for the boundary term we have
\begin{equation}\label{incre}
\begin{aligned}    
{} 
[\chempote(\uestatic)~\uestatic]_x^z = &
\chempote(\uestatic(x)) ~[\uestatic]^z_x  + 
\uestatic(z)~[\chempote(\uestatic)]_x^z 
\\
= &
\chempote(\uestatic(x))~ [\uestatic]^z_x  + O\left((z-x)^{1/2}\right),
\end{aligned}
\end{equation}
where $O$ is (another infinitesimal) still independent of $\eps$.
Collecting together \eqref{perdi}, \eqref{lire}, \eqref{lireli} and \eqref{incre} we deduce
\begin{equation}\label{bile}
W(\uestatic(z)) \geq 
W(\uestatic(x))+
\chempote(\uestatic(x)) \big(\uestatic(z)-\uestatic(x)\big) 
+ O\left((z-x)^{1/2}\right), 
\qquad z \in [x,y],
\end{equation}
and at $z=y$,
\begin{equation}\label{uncisipocrede}
W(\uestatic(y)) = 
W(\uestatic(x))+
\chempote(\uestatic(x))\big(
\uestatic(y)-\uestatic(x)\big) + O\left((y-x)^{1/2}\right).
\end{equation}
Assume now 
\begin{equation}\label{assumenow}
\vert \uestatic(y)-\uestatic(x)\vert \ge\epseps.
\end{equation}
Under this assumption we 
can rewrite \eqref{uncisipocrede} 
as 
\begin{equation}\label{faica}
\begin{aligned}
\chempote(\uestatic(x)) = &
s(x,y) + O\Big((y-x)^{1/2}(\uestatic(y)-\uestatic(x))^{-1}\Big)
\\
= &
s(x,y) + O\Big((y-x)^{1/2}/\epseps\Big),
\end{aligned}
\end{equation}
where
$$
s(x,y)
:=\frac{W(\uestatic(y))-W(\uestatic(x))}{\uestatic(y)-\uestatic(x)}\,.
$$
{}From \eqref{bile} and \eqref{faica} we have 
\begin{equation}\label{eqer}
\begin{aligned}
W(\uestatic(z))\ge & ~W(\uestatic(x))
+s(x,y)(\uestatic(z)-\uestatic(x))
\\
&~ +
 O\left((z-x)^{1/2}\right) 
+ O\left((y-x)^{1/2}/\epseps\right)
\\
=& 
~W(\ue(x))
+s(x,y)(\uestatic(z)-\uestatic(x))
+ O\left((y-x)^{1/2}/\epseps\right),
\end{aligned}
\end{equation}
where, again, $O$ is independent of $\eps$. Inequality
\eqref{eqer} says, roughly speaking, that between 
$\uestatic(z)$ and $\uestatic(x)$, the function $W$ must be concave, where
however one must take into account the presence of the error term
$O((y-x)^{1/2}/\epseps)$. For future purposes, it is convenient
to rewrite \eqref{eqer} in the form 
\begin{equation}\label{trag}
W(\uestatic(x))
-W(\uestatic(z))
+s(x,y)(\uestatic(z)-\uestatic(x))
\leq O\left((y-x)^{1/2}/\epseps\right).
\end{equation}
Without loss of generality, in the sequel of the 
proof we assume 
$$
\uestatic(x) \leq \uestatic(y).
$$
Recalling Lemma \ref{lemmazero}, we set 
$$
M := \sup_{\eps \in (0,1]} \|\uestatic\|_{L^\infty(\bb T)} < +\infty.
$$
Given $a, b \in \R$, $a < b$, 
define
$$
\psi(a,b) := 
\max_{c\in [a,b]} \left[W(a)-W(c)
+\frac{W(b)-W(a)}{b-a} (c-a)\right].
$$
Notice that the positivity of $\psi(a,b)$ measures how much 
the function $W$ fails to be concave. Observe also that 
\begin{equation}\label{eq:monti}
\lim_{b \downarrow a} \psi(a,b)=0.
\end{equation}
For any $\rho>0$ let
$\mathcal I_\rho$ be the family  of those intervals
$[a,b] \subset \R$ satisfying the following two properties:
\begin{itemize}
\item[-] $b-a\geq \rho$,
\item[-] $[a,b]$ is not contained in 
$\gusrho$, i.e., 
\begin{equation}\label{eq:gg}
[a,b] \cap
\Big(\R \setminus \gusrho \Big)\neq \emptyset.
\end{equation}
\end{itemize} 
It is convenient to introduce the function 
$\omega: (0,+\infty) \to [0, +\infty]$
defined as follows:
\begin{equation}
\label{eq:sopra}
\omega(\rho):= 
\inf_{[a,b] \subset 
[-M,M], ~
[a,b] \in \mathcal I_\rho
} ~ ~ \psi(a,b). 
\end{equation}
If $\mathcal I_\rho = \emptyset$ (namely, if $\rho>0$ is such that 
there are no
intervals $[a,b]$ contained in $[-M,M]$ with $b-a \geq \rho$ 
and satisfying \eqref{eq:gg} at the same time)
then the infimum on the right hand side of 
\eqref{eq:sopra} is $+\infty$, so that $\omega(\rho) = +\infty$. 
On the other hand, possibly increasing the value of $M$, we can always ensure that 
$\omega<+\infty$ on $(0,\rho_0)$, for some $\rho_0>0$. 
In the sequel we shall assume $\epseps<\rho_0$, so that $\omega(\epseps)<+\infty$.
 
Note that if $\omega(\rho)<+\infty$ then the infimum on the 
right hand side of \eqref{eq:sopra} is a minimum, since $[a,b]$
are constrained to lie in the compact set $[-M,M]$. 
Moreover, recalling that by assumption $W$ is not affine in any interval,
%on the domain $\{\rho \in (0,+\infty) : \omega(\rho) < +\infty\}$ of $\omega$
we have
\begin{itemize}
\item[-] $\omega(\rho)>0$,
\item[-] if $\rho_1 < \rho_2$ then $\mathcal I_{\rho_1} \supseteq 
\mathcal I_{\rho_2}$, and therefore 
$\omega$ is nondecreasing;
\item[-]
 $\lim_{\rho\downarrow 0}\omega(\rho)=0$, as a consequence of \eqref{eq:monti}.
\end{itemize}

%If $[\uestatic(x), \uestatic(y)]$ is contained in $\guse$, so that 
%$\omega(\uestatic(y) - \uestatic(x))=+\infty$, there is nothing to prove.
%because \eqref{eq:gg} is valid. 
%We can therefore assume that 
%
%$$
%\omega\big(\uestatic(y)-\uestatic(x)\big) < +\infty.
%$$
%
%Possibly reducing $\eta$ we can assume that $\omega(\eta)<+\infty$. 
Suppose now that 
\begin{equation}\label{isnotcontained}
[\uestatic(x), \uestatic(y)] \rm{~ is~ not~ contained~ in~ \guse}.
\end{equation}
Recalling that $\omega$ is positive, choose $\delta$ be such that 
\begin{equation}\label{contra}
O(\delta^{1/2}/\epseps)\le\frac{\omega(\epseps)}{2},
\end{equation}
where $O$ denotes the remainder term appearing in \eqref{trag}. From \eqref{trag} it then follows
\begin{equation}\label{eqer2}
\begin{aligned}
\max_{z \in [x,y]} \Big(
W(
\uestatic
(x))
-W(\uestatic(z)) +s(x,y)(\uestatic(z)-	\uestatic
(x))\Big)
\leq O(\delta^{1/2}/\epseps) \leq \frac{\omega(\epseps)}{2}.
\end{aligned}
\end{equation}
On the other hand, 
choosing 
$$
a = \uestatic(x), \qquad b =
\uestatic(y)
$$
on the right hand side of \eqref{eq:sopra}, 
and remembering \eqref{assumenow} and \eqref{isnotcontained},
it follows
$$
\max_{z \in [x,y]} \Big(
W(\uestatic(x))
-W(\uestatic(z)) +s(x,y)(\uestatic(z)-\uestatic(x))\Big) \geq \omega(\epseps),
$$
which contradicts
\eqref{eqer2}. We conclude that
\begin{equation}\label{fag}
[\uestatic(x),\uestatic(y)] \subseteq \guse.
\end{equation}
Let us now complete the proof of \eqref{ceci}. 
If $\uestatic(z) \in
[\uestatic(x),\uestatic(y)]$ for any $z \in [x,y]$, from \eqref{fag} 
we deduce $\uestatic(z) \in \Sigma_G^\epseps$, and the proof is concluded.
It remains to consider the case when there exists $z \in (x,y)$ such that 
$$
\uestatic(z) \notin [\uestatic(x),\uestatic(y)]. 
$$
We can assume that 
$\uestatic(z) > \uestatic(y)$, the case $\uestatic(z) < \uestatic(x)$ being similar.
Choose $y'\in [x,y]$ so that $\uestatic(y') = 
\displaystyle 
\max_{\tau \in [x,y]}
\uestatic(\tau)\geq \uestatic(z)$,
and $x' \in [x,y]$ so that $\uestatic
(x') = \displaystyle 
\min_{\tau \in [x,y]}
\uestatic(\tau)\leq \uestatic(z)$. 
Recalling \eqref{assumenow} we have $\vert \uestatic(y') - 
\uestatic
(x')\vert \geq \epseps$.
Therefore we can apply the previous arguments replacing $x$ with
$x'$ and $y$ with $y'$, so that inclusion 
\eqref{fag} reads now as 
$[\uestatic(x'), \uestatic(y')] \subseteq \guse$. This is precisely
inclusion \eqref{ceci}.
\end{proof}

The next lemma says, roughly speaking, that if 
$\uestatic$ asymptotically 
oscillates (as $\eps \downarrow 0$),
then it necessarily does it within the same connected component
of $\gus$. We will focus our attention
on ${W^{**}}'(\uestatic)$, in view of the applications in 
Section \ref{sec:newnewprel}. 
\begin{lemma}[{\bf Support of $\mu_x$, II}]\label{l:youngdelta}
Let $\ustatic$, $(\uestatic)$ and 
$\mu$ be as in Lemma \ref{l:mu}. 
Then, one of the two following alternatives holds:
\begin{itemize}
\item[-] 
for almost every $x \in \toro$ such that 
$\mu_x
({W^{**}}')$ is not contained in 
${W^{**}}'\left(\overline \gus\right)$,
then $\mu_x$ is a Dirac delta;
\item[-] 
for almost every $x \in \toro$ such that 
$\mu_x({W^{**}}')$ is contained in 
${W^{**}}'\left(\overline \gus\right)$,
then 
$\mu_x$ is supported on $\overline{\gus}$.
\end{itemize}
\end{lemma}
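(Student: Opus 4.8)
The plan is to translate the pointwise-in-space oscillation dichotomy of Lemma \ref{lemmamatteo} into a statement about the Young measure $\mu$, and to do this I would work with the monotone rearrangement structure of the critical points of $\uestatic$. First I would fix $\epseps > 0$ and let $\delta = \delta(\epseps,c)$ be as in Lemma \ref{lemmamatteo}, where $c$ is the uniform bound from \eqref{eq:unifenergy}. Cover $\toro$ by finitely many intervals $J_1,\dots,J_N$ of length $\le \delta$. Since $\uestatic \in H^3(\toro) \subset \mathcal C^2(\toro)$ and is not constant on any $J_i$ (for if it were constant on an interval, $\mu_x$ would already be a Dirac delta there), between consecutive critical points of $\uestatic$ inside $J_i$ the function $\uestatic$ is monotone. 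On each maximal monotone piece $[\xe,\ye] \subseteq J_i$ with ${\uestatic}_x(\xe)={\uestatic}_x(\ye)=0$, Lemma \ref{lemmamatteo} gives the dichotomy: either $\uestatic([\xe,\ye]) \subseteq \guse$, or the total variation of $\uestatic$ on $[\xe,\ye]$ is $< \epseps$.

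The key step is then to partition $\toro$ (up to a null set, uniformly in $\eps$) into a \emph{good set} $G_\eps$, the union of those monotone pieces on which the first alternative holds (so $\uestatic(G_\eps) \subseteq \guse$), and a \emph{bad set} $B_\eps = \toro \setminus G_\eps$, on which $\uestatic$ has small oscillation: more precisely, partitioning $B_\eps$ into the $\mathcal O(N)$ maximal monotone pieces where the second alternative holds, on each such piece $\uestatic$ oscillates by less than $\epseps$. I would then pass to the limit along the subsequence $(\uekstatic)$ of Lemma \ref{l:mu}(c). Up to a further subsequence, $\un{G_{\eps_k}} \to g$ and $\un{B_{\eps_k}} \to 1-g$ weakly$^*$ in $L^\infty(\toro)$ for some $g:\toro \to [0,1]$, and testing the Young-measure convergence against $f \cdot \un{G_{\eps_k}}$ and against $f \cdot \un{B_{\eps_k}}$ one can split, for a.e.\ $x$, the measure as $\mu_x = g(x)\,\mu_x^G + (1-g(x))\,\mu_x^B$ with $\mathrm{spt}(\mu_x^G) \subseteq \overline{\guse}$ (hence, letting $\epseps \downarrow 0$ along a diagonal argument, $\subseteq \overline{\gus}$) and $\mu_x^B$ having very small ``diameter'' — namely ${W^{**}}'$, being Lipschitz, varies by $o(1)$ on $\mathrm{spt}(\mu_x^B)$ as $\epseps \downarrow 0$. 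Since $\overline{\gus}$ has finitely many connected components (assumption iii in Section \ref{subsec:thepot}) and each $\mu_x^B$ is concentrated on an interval of length $\to 0$, for a.e.\ $x$ the small interval either eventually lies inside $\overline{\gus}$ or stays at positive distance from it; combining with Corollary \ref{cor:int} applied on $\R \setminus \lus$ (and the fact that $\gus \subseteq \lus$ so that $\overline{\gus}\setminus\lus$ consists of finitely many endpoints) forces, in the limit, $\mu_x$ to be either a Dirac delta or supported on $\overline{\gus}$, which is exactly the claimed alternative for ${W^{**}}'(\mu_x)$. Finally, to see that the two alternatives are globally exclusive (one or the other holds for a.e.\ $x$, not a mixture), I would observe that if on a set of positive measure $\mu_x$ is supported on $\overline{\gus}$ and is not a Dirac, then on its complement in that set $\mu_x({W^{**}}') \in {W^{**}}'(\overline{\gus})$ automatically (since ${W^{**}}'$ is constant on each component of $\gus$, so ${W^{**}}'(\overline\gus)$ is a finite set), which pins down the correct branch.

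I would expect the main obstacle to be the \emph{uniformity in $\eps$ of the combinatorial decomposition}: Lemma \ref{lemmamatteo} controls pairs of critical points only when $\ye - \xe \le \delta$, but $\uestatic$ may a priori have infinitely many critical points in a $\delta$-interval, so one must argue that the \emph{number} of maximal monotone pieces that fall in the ``bad'' (small-oscillation) alternative can be bounded independently of $\eps$ — or, more robustly, that their \emph{total} contribution to the oscillation of $\uestatic$ along $J_i$ is controlled. This is where the uniform energy bound $\Fe(\uestatic) \le c$ re-enters: the bound on $\int_\toro \epss {\uestatic}_x^2\,dx$ together with the $L^\infty$ bound of Lemma \ref{lemmazero} limits how wildly $\uestatic$ can oscillate, and one extracts from it that the ``bad'' set $B_\eps$, while it may be a union of many pieces, carries a uniformly small spatial measure or a uniformly small variation — enough to make the diagonal limit $\epseps \downarrow 0$ go through. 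The second, more technical point is justifying the decomposition $\mu_x = g\mu_x^G + (1-g)\mu_x^B$ rigorously: this requires a standard but careful Young-measure localization argument (test against $f(\uestatic)\varphi\un{G_\eps}$, using that $\un{G_\eps}$ converges weakly$^*$ and that the product of a weak$^*$-convergent bounded sequence with a Young-measure-convergent sequence localizes), which I would carry out by a monotone-class / approximation argument on $f$ and $\varphi$.
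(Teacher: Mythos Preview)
Your plan has a factual error and a real gap. The error: you write ``$\gus \subseteq \lus$'', but the inclusion goes the other way. For the double well $W(\var)=(1-\var^2)^2/4$ one has $\gus=(-1,1)$ while $\lus=(-1/\sqrt 3,1/\sqrt 3)$, and in general $\lus\subseteq\gus$ since on $\R\setminus\gus$ the potential coincides with its convex envelope and hence $W''\ge 0$ there; your concluding appeal to Corollary~\ref{cor:int} therefore does not go through as written. The gap: the splitting $\mu_x = g(x)\mu_x^G + (1-g(x))\mu_x^B$ cannot be obtained by ``testing against $f\cdot\un{G_{\eps_k}}$'', because both $f(\vek)$ and $\un{G_{\eps_k}}$ oscillate at the same spatial scale and the weak$^*$ limit of their product is not the product of the limits. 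You would need a joint Young measure for the pair $(\vek,\un{G_{\eps_k}})$ and a disintegration. Even granting that, your ``bad'' set is built from \emph{consecutive} critical points, so nothing prevents the values on different bad pieces within the same $J_i$ from being far apart; you would have to apply Lemma~\ref{lemmamatteo} to \emph{non}-consecutive critical points in $J_i$ to get the small-diameter claim for $\mu_x^B$, and you do not do this.

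The paper bypasses all of this with one observation: set $\we:={W^{**}}'(\ve)$. Since ${W^{**}}'$ is constant on each component of $\gus$, \emph{both} alternatives of Lemma~\ref{lemmamatteo} give $|\we(\xe)-\we(\ye)|\le L\epseps$; the dichotomy collapses into a uniform modulus of continuity for $\we$ at the critical points of $\ve$. Linearly interpolating between critical points yields an equicontinuous family $\hwe$, which converges uniformly by Arzel\`a--Ascoli; a direct counting argument (for each $n$ there are at most $1/\delta(1/n)$ maximal monotone intervals of oscillation in $(1/n,1/(n-1)]$) then shows $\we-\hwe\to 0$ a.e. The resulting a.e.\ convergence $\we\to w:=\mu({W^{**}}')$ gives the lemma immediately: where $w(x)\notin {W^{**}}'(\overline\gus)$, strict monotonicity of ${W^{**}}'$ off $\overline\gus$ forces $\ve(x)\to v(x)$ and $\mu_x=\delta_{v(x)}$; where $w(x)\in {W^{**}}'(\overline\gus)$, one gets $\mathrm{dist}(\ve(x),\gus)\to 0$ and hence $\mathrm{spt}(\mu_x)\subseteq\overline\gus$. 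No spatial good/bad decomposition, no Young-measure surgery, and no use of Corollary~\ref{cor:int}.
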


\begin{proof}
Define 
$$
\we
:=W^{**~\!\prime}(\uestatic)
$$
which, remembering \eqref{stimainfinity}, is a Lipschitz function
on $\toro$.
We now translate the thesis of Lemma \ref{lemmamatteo} for $w_\eps$.
For $\delta$ as in Lemma \ref{lemmamatteo} 
we set 
$$
\delta'(\epseps):= \delta\left(\frac{\epseps}{2L},c\right)
\qquad \epseps>0,
$$
where $L$ is the Lipschitz 
constant of $W^{**~\!\prime}$ in $[-M,M]$, and $M$ is as in \eqref{M}.
Notice that in the definition of $\delta'$ we need 
$2L$ instead of $L$, to cover the case when \eqref{ceci} holds.

If $x_\eps$ and $y_\eps$
satisfy the assumption of Lemma \ref{lemmamatteo}
with $\delta$ replaced by $\delta'$, we have
\begin{equation}\label{wat}
|\we(x_\eps)-\we(y_\eps)|\le\epseps.
\end{equation}
Observe that this is not a uniform continuity condition on $\we$, since the points
$x_\eps,y_\eps$ are just critical points of $\uestatic$ (and depend on $\eps$), 
and therefore are not
arbitrary points of $\toro$.

Possibly replacing $\delta'$ with its convex envelope,
we can assume
that $\delta'$ is a nonzero convex function 
(tending to zero at zero) in a bounded open interval 
having zero as the left extremum.

{}From Lemma \ref{l:mu} (c) 
we know that 
$$
\lim_{\eps\downarrow 0} 
\we = \mu(W^{**~\!\prime}) =: w \quad {\rm weakly}^*~ {\rm in~} L^\infty(\toro).
$$
We now want to pass from a control on critical points
to a control on the whole of $\toro$. We therefore find
convenient to consider linear interpolations. 
\smallskip

{\it Claim}. Up to extracting a (not relabeled) subsequence, we have 
\begin{equation}\label{caneclaim}
\we\to w \qquad
{\rm almost~ everywhere~in~} \toro ~{\rm as}~ \eps \downarrow 0.
\end{equation}

Let $\hwe \in {\rm Lip}(\toro)$ be such that $\hwe$ is affine in 
each maximal 
open interval of strict monotonicity
%\footnote{These intervals
%are the connected components of the open set $\{x \in \toro:
%{\uestatic}_x(x) > 0\} \cup \{x \in \toro: 
%{\uestatic}_x (x) < 0\}$.
%There is at most a countable number of such intervals.} 
of $\uestatic$,
and coincides with $\we$ on the boundary of such an interval. 
Notice that there exists at most a countable number of such intervals.

Let us show that from \eqref{wat} 
it follows
that for all $\epseps>0$ there exists $\delta''(\epseps)>0$ 
independent of $\eps$ such that 
\begin{equation}\label{vadimo}
x\in \toro,~ y \in \toro, \ \ |x-y|\le \delta''(\epseps) \quad  
\Rightarrow \quad
|\hwe(x)-\hwe(y)|\le\epseps.
\end{equation}
To prove \eqref{vadimo} we distinguish two cases. 

First case: 
$x$ and $y$ belong to the same monotonicity interval $I$ 
of $\uestatic$.
Assuming without loss of generality that $x<y$, let $x' \leq x$ and $y' \geq 
y$ be such that $I = (x',y')$. Set 
$$
\lambda := 
\frac{\vert x-y\vert}{\vert x'-y'\vert} \in (0,1].
$$
By construction and 
from \eqref{wat} we know
$$
\vert x' - y'\vert < \delta' \Rightarrow \vert 
\hwe (x') - \hwe(y')\vert<  \epseps.
$$
Hence, as $\hwe$ is affine in $I$, 
\begin{equation}\label{disuggiusta}
\vert x - y\vert < \lambda \delta' \Rightarrow \vert 
\hwe (x) - \hwe(y)\vert<  \lambda \epseps.
\end{equation}
Since $\delta'$ is convex and $\delta'(0)=0$, we have $\lambda \delta'(\epseps)\geq
\delta'(\lambda \epseps)$, and therefore replacing $\epseps$ by $\lambda
\epseps$ and using \eqref{disuggiusta} we deduce \eqref{vadimo} with 
$\delta''$ replaced by $\delta'$. 

Second case:  $x$ and $y$ do not 
belong to the same monotonicity interval  
of $\uestatic$. Assuming without loss of generality that 
$x<y$, 
let $x', y'$ be such that 
\begin{itemize}
\item[-]
$ x\le x' \le y' \le y$,
\item[-]
$ x'$ and $ y'$ are critical points of $\uestatic$,
\item[-] $\hwe$ is strictly
monotone between  $x'$ and $y'$.
\end{itemize}

Then the formula
$$
|x-y| = |x-x'|+|x'-y'|+|y'-y| < \delta'(e) 
$$
implies, using the first case in $[x,x']$ and in $[y',y]$, and
using \eqref{wat} in $[x',y']$, 
$$
|\hwe(x)-\hwe(y)| \le
|\hwe(x)-\hwe(x')|+
|\hwe(x')-\hwe(y')|+|\hwe(y')
-\hwe(y)|
\le 3 \epseps.
$$
That is,
$$
|x-y| < \delta''(\epseps) \Rightarrow 
|\hwe(x)-\hwe(y)| \le \epseps,
$$
where 
$$
\delta''(\epseps) = \delta'(\epseps/3).
$$
This concludes the proof of \eqref{vadimo}.

From \eqref{vadimo} it follows 
that the functions $\hwe$ are equicontinuous, and by Lemma~\ref{lemmazero} they are also  uniformly bounded. We
can apply 
Ascoli-Arzel\`a's
Theorem to get that, possibly passing to a (not relabeled) subsequence,
$$
\hwe\to \widehat w \ {\rm uniformly~ in~ }
\toro, 
$$
for some $\widehat w\in \mathcal C^0(\toro)$.   

For any $n\in\mathbb N$ 
we let $I_1^{\eps,n},\ldots,I_{N_{\eps,n}}^{\eps,n}$ be such that 
$I_j^{\eps,n}=(a_j^{\eps,n},b_j^{\eps,n})\subseteq \toro$ is
a maximal interval
%\footnote{$J_j^{\eps,n} \cap I_h^{\eps,n} = \emptyset$ for $j\neq h$.} 
of strict monotonicity of $\uestatic$ and 
\begin{equation}\label{eqdisj}
{\rm osc}(\uestatic;I_j^{\eps,n}):= 
\sup_{I_j^{\eps,n}}\uestatic 
- \inf_{I_j^{\eps,n}}\uestatic \in \left(\frac{1}{n},\frac{1}{n-1}\right],
\qquad j=1,\dots,N_{\eps,n},
\end{equation}
where $N_{\eps,n} \in {\mathbb N} \cup \{+\infty\}$. 
Actually, $N_{\eps,n}$ is finite,
since from Lemma \ref{lemmamatteo} it follows 
$$
|I_j^{\eps,n}|=|a_j^{\eps,n}-b_j^{\eps,n}|>\delta(1/n),
$$ 
so that 
$$
N_{\eps,n} \leq \frac{1}{\delta(1/n)}. 
$$
Up to extracting a further (not relabeled) subsequence, 
we may assume that 
$$
N_{\eps,n} =  N_n,
$$
where $N_n$ depends only on $n$,
and
$$
a_j^{\eps,n}\to a_j, \qquad b_j^{\eps,n} \to b_j^n \quad {\rm  as}~ 
\eps\downarrow 0, 
\qquad
j\in \{1,\ldots,N_n\}.
$$

Let $I^{\eps,n}:=\cup_{j=1}^{N_{n}}I_j^{\eps,n}$ 
and $I^n:=\cup_{j=1}^{N_{n}}I_j^n$. 
Notice that from \eqref{eqdisj} it follows 
$$
I^n\cap I^m=\emptyset \quad {\rm if}~ n\ne m.
$$
For any interval $[a,b]\subset \cup_{n\in\mathbb N}I^n$, the functions $\we$ are monotone on $[a,b]$ for all $\eps>0$ small enough.
As a consequence, 
up to a further subsequence,
\begin{equation}\label{eq:insalata}
\we \to w \qquad {\rm a.e.~on~} 
\cup_{n\in\mathbb N}I^n \ \ {\rm as}~ \eps \downarrow 0.
\end{equation}
On the other hand, given $n \in \mathbb N$ and $x\in 
\toro\setminus (\cup_{m\in\mathbb N}\overline{I^m})$, we have 
${\rm dist}(x,I^\eps_n)\ge c(n)>0$ for all $\eps>0$ small enough, so that
\begin{equation}\label{quella}
|\we(x)-\widehat w(x)|\le 
|\we(x)-\hwe(x)| + |\hwe(x) -\widehat w(x)|\le 
\frac{1}{n}+
\frac{1}{n},
\end{equation}
for $\eps>0$ small enough.
By the arbitrariness of $n\in\mathbb N$ we then get 
$\we\to\widehat w$ uniformly on 
$\toro \setminus 
\cup_{m\in \mathbb N} \overline{I^m}$ as $\eps\downarrow 0$.
This shows that 
\begin{equation}\label{eq:www}
\widehat w = w \qquad {\rm in~} 
\toro \setminus 
\bigcup_{m\in \mathbb N} \overline{I^m}.
\end{equation}
Then \eqref{eq:insalata} and \eqref{eq:www} 
conclude the proof of claim \eqref{caneclaim}. 
\smallskip

Eventually, we show that the claim implies the thesis of the lemma. 
Indeed, for 
almost every $x\in\toro$ such that $w(x)\not\in W^{**~\!\prime}(
\overline \gus)$, 
by the strict monotonicity of $W^{**~\!\prime}$ we have 
$$
\uestatic(x)\to \ustatic(x) \quad {\rm as}~\eps\downarrow 0,
$$
which implies $\mu_x=\delta_{\ustatic(x)}$. 
On the other hand, for 
almost every $x\in\toro$ such that $w(x)\in W^{**~\!\prime}(
\overline \gus)$, 
 we have ${\rm dist}(\uestatic(x),\gus)\to 0$ as $\eps \downarrow 0$,
which implies  
${\rm spt}(\mu_x)\subseteq\ovgus$.
\end{proof}

\nada{
\begin{corollary}
Let $u$ be as in Lemma \ref{l:mu}. Then 
${W^{**}}'(u) \in H^1(\toro)$.
\end{corollary} 

\begin{proof}
Let $\mu$ be as in Lemma \ref{l:mu} (d), so that
$$
\mu(W') \in H^1(\toro).
$$
In particular, $\mu(W')$ is $\alpha$-H\"older continuous in $\toro$,
for some $\alpha \in (0,1)$.

Define 
$$
\Omega := \{x \in \toro: \mu_x(W') \notin {W^{**}}'(\ovgus)\}.
$$
Then $\Omega$ is an open set, hence a countable
union of disjoint open intervals. Moreover, for almost
every  $x \in \Omega$ we have, 
being $W(u(x)) = W^{**}(u(x))$, 
$$
\mu_x(W') = 
W'(u(x)) =  {W^{**}}'(u(x)).
$$
Therefore
$$
{W^{**}}'^{-1}(W'(u(x))) = u(x), \qquad {\rm a.e.}~ x \in \Omega.
$$
In particular $u$ has a uniformly continuous 
representative in $\Omega$. 

We now observe that if $x \to \partial \Omega$,
then ${\rm dist}(u(x), \ovgus)\to 0$.

Using Lemma \ref{l:youngdelta}, it follows that for almost every $x \in \toro$ we have
$$
u(x)\in {
W^{**}}'^{-1}(\mu_x(W')).
$$
Set 
${W^{**}}'(\overline \Sigma_i)= \alpha_i \in \R$

Define the open set 
$\Omega := \{y\in \R: y \notin {W^{**}}'(\ovgus)\}$.
Then $\Omega = \{u \notin\ovgus\}$,
$u= W**'^{-1}(w)\in C^0(\Omega)$ e quindi $W^{**}'(u)=w$ in $\Omega$.
Moreover $u$ e' estendibile per continuita' su $\overline\Omega$.

Ne segue $W^{**}'(u)
\in H^1$ e $\int_T |W**'(u)_x|^2 = \int_\Om |\mu(W')_x|^2\le
\int_T |\mu(W')_x|^2$.
\end{proof}
}

A useful consequence of Lemma \ref{l:youngdelta}
is the following. 

\begin{corollary}\label{ause} 
Under the assumptions of Lemma 
\ref{l:youngdelta}, we have 
$$ 
\mu_x({W^{**}}') = 
{W^{**}}'(\ustatic(x)) \qquad {\rm for~a.e.~} x \in \toro. 
$$ 
\end{corollary}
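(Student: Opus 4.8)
The plan is to deduce Corollary \ref{ause} directly from the dichotomy established in Lemma \ref{l:youngdelta}, treating separately the two alternatives that appear there. Throughout, write $w := \mu({W^{**}}')$, which by Lemma \ref{l:mu}(c) is the weak$^*$ limit in $L^\infty(\toro)$ of $w_\eps = {W^{**}}'(\uestatic)$, and recall that ${W^{**}}'$ is monotone nondecreasing, is strictly increasing precisely on $\R \setminus \gus$ (up to the closure), and is constant on the closure of each connected component $\Sigma_i$ of $\gus$, taking there some value $\alpha_i \in \R$.

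First I would handle the set where $\mu_x$ is a Dirac delta: by Lemma \ref{l:youngdelta}, for a.e.\ $x$ with $\mu_x({W^{**}}') \notin {W^{**}}'(\ovgus)$ we have $\mu_x = \delta_{\ustatic(x)}$, so trivially $\mu_x({W^{**}}') = {W^{**}}'(\ustatic(x))$. The remaining case is a.e.\ $x$ with $\mu_x({W^{**}}') \in {W^{**}}'(\ovgus)$, where Lemma \ref{l:youngdelta} only tells us $\mathrm{spt}(\mu_x) \subseteq \ovgus$. For such $x$, $\mathrm{spt}(\mu_x)$ lies in $\ovgus = \overline{\Sigma_1} \cup \dots \cup \overline{\Sigma_\ell}$. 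The key observation is that ${W^{**}}'$ is constant, equal to $\alpha_i$, on each $\overline{\Sigma_i}$. Since ${W^{**}}'$ is continuous and monotone, the preimage $({W^{**}}')^{-1}(\{\alpha_i\})$ is a closed interval containing $\overline{\Sigma_i}$, and the values $\alpha_i$ need not be distinct; but in any case, whenever $\mathrm{spt}(\mu_x)$ meets two components $\overline{\Sigma_i}$ and $\overline{\Sigma_j}$ with $\alpha_i \ne \alpha_j$, the gap between them is an interval on which ${W^{**}}'$ is strictly increasing, and one must rule this out.

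The cleanest route here is to invoke Corollary \ref{cor:int}: if $\mathrm{spt}(\mu_x)$ is contained in a single connected component of $\R \setminus \lus$ — in particular if it lies inside a single $\overline{\Sigma_i}$, since each $\Sigma_i$ is disjoint from $\lus$ by the inclusion $\lus \subseteq \gus$ applied componentwise, wait — more carefully, on $\overline{\Sigma_i}$ the function ${W^{**}}'$ is affine (constant), so Corollary \ref{cor:int} is not literally applicable there, but we don't need it: if $\mathrm{spt}(\mu_x) \subseteq \overline{\Sigma_i}$ then ${W^{**}}'$ is constant $=\alpha_i$ on $\mathrm{spt}(\mu_x)$, hence $\mu_x({W^{**}}') = \alpha_i = {W^{**}}'(\ustatic(x))$ because $\ustatic(x) = \mu_x(\imath) \in \overline{\Sigma_i}$ as well (the support is contained in $\overline{\Sigma_i}$, so its barycenter is too). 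Thus the only genuine issue is excluding that $\mathrm{spt}(\mu_x)$ straddles two distinct components of $\gus$ on whose closures ${W^{**}}'$ takes different values. But on the complement $\R \setminus \gus$ the function ${W^{**}}' = W'$ is strictly increasing (as $W$ is not affine in any interval and $W=W^{**}$ there), so an interval $J \subseteq \R \setminus \gus$ separating $\overline{\Sigma_i}$ from $\overline{\Sigma_j}$ is contained in a connected component of $\R \setminus \lus$ on which ${W^{**}}' = W'$ is strictly monotone; applying the argument of Proposition \ref{c:mucorr} and Corollary \ref{cor:int} to a nondecreasing continuous $l$ agreeing with $W'$ near $J$ forces, on the positive-measure set of such $x$, that ${W^{**}}'$ is constant $\mu_x$-a.e., a contradiction with it being strictly increasing there unless $\mu_x$ charges only one side.

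The main obstacle I anticipate is this last separation argument: making precise, at the level of the Young measure, that $\mathrm{spt}(\mu_x)$ cannot be split between two components of $\gus$ with different ${W^{**}}'$-values. This requires combining the pointwise a.e.\ convergence $w_\eps \to w$ from claim \eqref{caneclaim} in the proof of Lemma \ref{l:youngdelta} with the structure of ${W^{**}}'$; the point is that once $w(x) = \alpha_i$ is forced (the condition $\mu_x({W^{**}}') \in {W^{**}}'(\ovgus)$ together with $\mathrm{spt}(\mu_x) \subseteq \ovgus$ pins down $\mu_x({W^{**}}') = w(x)$ to one of the finitely many level values, locally constant in $x$ on each interval $I^n$), the support of $\mu_x$ is confined to the single closed interval $({W^{**}}')^{-1}(\{w(x)\})$, which meets $\gus$ in exactly those components $\Sigma_i$ with $\alpha_i = w(x)$; if there were two such the open gap between them would lie in $\R \setminus \gus$ where ${W^{**}}'$ is strictly increasing, contradicting that it equals the constant $w(x)$ there. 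Hence $\mathrm{spt}(\mu_x)$ lies in one $\overline{\Sigma_i}$ and the computation above closes the argument. I would write this out using Corollary \ref{cor:int} to handle the strictly-monotone gap and the barycenter identity $\ustatic = \mu(\imath)$ to conclude.
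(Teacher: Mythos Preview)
Your approach mirrors the paper's: split according to the dichotomy of Lemma~\ref{l:youngdelta} and handle each branch separately. The paper disposes of the second branch in one line --- ``$\mu_x$ is supported on $\overline{\gus}$, where ${W^{**}}'$ is constant'' --- which tacitly uses that ${\rm spt}(\mu_x)$ lies in a \emph{single} component $\overline{\Sigma_i}$, so that the barycenter $\ustatic(x)=\mu_x(\imath)$ also lies there and both sides equal $\alpha_i$. Your concern about the support straddling several $\overline{\Sigma_i}$ is therefore a legitimate one, and you correctly locate the fix: the pointwise a.e.\ convergence $\we(x)={W^{**}}'(\uestatic(x))\to w(x)$ from claim~\eqref{caneclaim}, together with the fact that $({W^{**}}')^{-1}(\{\alpha_i\})=\overline{\Sigma_i}$ (since ${W^{**}}'$ is strictly increasing off $\overline{\gus}$), forces ${\rm dist}(\uestatic(x),\overline{\Sigma_i})\to 0$ on $\{w=\alpha_i\}$ and hence ${\rm spt}(\mu_x)\subseteq\overline{\Sigma_i}$ for a.e.\ such $x$. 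This is precisely the argument in the final paragraph of the proof of Lemma~\ref{l:youngdelta}, only stated for the specific component rather than for all of $\overline{\gus}$.

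The detour through Proposition~\ref{c:mucorr} and Corollary~\ref{cor:int} that you sketch is unnecessary and, as written, does not close the argument: those results constrain $\mu_x$ when its support lies in an interval on which $W'$ is strictly monotone, but in the case at hand ${\rm spt}(\mu_x)\subseteq\overline{\gus}$ already \emph{avoids} the gaps between components, so Corollary~\ref{cor:int} has nothing to act on. Drop that route and keep only the a.e.\ convergence argument, which is both what the paper (implicitly) does and what you yourself identify as the ``cleanest route'' in your last paragraph.
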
 
\begin{proof} 
If $\mu_x ({W^{**}}')$ is not contained in 
${W^{**}}'\left(\overline \gus\right)$, then $\mu_x$ is a Dirac delta, 
and the assertion follows. 
If $\mu_x({W^{**}}')$ is contained in ${W^{**}}'\left(\overline 
\gus\right)$, then $\mu_x$ is supported on $\overline{\gus}$,
where ${W^{**}}'$ is constant.
\end{proof}

%\section{Improvement of the localization lemma}\label{sec:ipro}
We now improve Lemma \ref{lemmamatteo}
and deduce two corollaries, 
which will be necessary in the proof of Theorem \ref{p:gammaslope}. 
For clarity of exposition, we prefer to state the next lemma
separately from Lemma \ref{lemmamatteo}, even if its proof remains almost
unchanged.

\begin{lemma}[{\bf Localization of oscillations, II}]\label{lemmamatteoimprove}
Let $(\uestatic)
 \subset \spazio$ be a sequence of functions satisfying the bound 
\eqref{eq:unifenergylemma}.
For any $\epseps >0$ and  $C>0$
\begin{itemize}
\item[-] there exists
$\delta = \delta(\epseps,c)>0$, 
depending on $\epseps$ and $c$, but independent of $\eps$ and
$C$, 
\item[-]  there exists $\eps_0 = \eps_0(\epseps,c,C)>0$ 
depending on $\epseps$, $c$  and $C$,
\end{itemize}
such that for any pair $x_\eps\in \toro$,  $y_\eps \in \toro$ of points
satisfying the properties 
\begin{itemize}
\item[(i)] $0 < y_\eps-x_\eps \leq \delta$,
\item[(ii)] 
$\vert {\uestatic}_x(x_\eps)\vert \leq C$,
$\vert {\uestatic}_x(y_\eps) \vert \leq  C$,
\end{itemize}
we have either 
\begin{equation}\label{cecibis}
\uestatic(z) \in  \guse, \qquad z \in [x_\eps,y_\eps], \qquad
\eps \in (0,\eps_0), 
\end{equation}
or 
\begin{equation}\label{secondapossbis}
\vert \uestatic(y_\eps) - \uestatic(x_\eps)\vert < \epseps,
\qquad
\eps \in (0,\eps_0).
\end{equation}
\end{lemma}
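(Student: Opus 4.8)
The plan is to mimic the proof of Lemma~\ref{lemmamatteo} almost verbatim, the only difference being that the endpoint conditions ${\uestatic}_x(\xe)={\uestatic}_x(\ye)=0$ are replaced by the weaker bounds $\vert{\uestatic}_x(x_\eps)\vert\le C$, $\vert{\uestatic}_x(y_\eps)\vert\le C$. First I would revisit the chain of estimates \eqref{perdi}--\eqref{incre}. In \eqref{perdi} the term $-\frac{\epss}{2}({\uestatic}_x(z))^2$ was simply dropped using ${\uestatic}_x(z)^2\ge 0$, which is still valid; but at the endpoint $z=y$ the identity \eqref{lire} now picks up an extra term $-\frac{\epss}{2}({\uestatic}_x(y))^2$, which is bounded by $\frac{\epss}{2}C^2$. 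Since $\eps\downarrow 0$, this term is $O(\epss C^2)$ and hence negligible \emph{for $\eps$ small, depending on $C$}; this is exactly the reason the improved statement needs a threshold $\eps_0=\eps_0(\epseps,c,C)$, whereas $\delta$ can remain $C$-independent (the error coming from $C$ enters only through $\epss C^2$, not through $\delta$).

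Next I would reexamine the boundary terms. In \eqref{lireli}--\eqref{incre} the only place the critical-point hypothesis was used implicitly is in controlling $[\chempote(\uestatic)\uestatic]_x^z$ and the increments $[\uestatic]_x^z$, $[\chempote(\uestatic)]_x^z$; these were estimated purely through \eqref{stima} and \eqref{stimainfinity}, i.e.\ through the uniform bounds on $\Vert{\chempote(\uestatic)}_x\Vert_{L^2}$ and $\Vert\uestatic\Vert_{L^\infty}$, and \emph{not} through ${\uestatic}_x=0$ at the endpoints. Hence \eqref{bile} and \eqref{uncisipocrede} survive unchanged, with the one modification noted above: the error term in \eqref{uncisipocrede} now also contains an additive $O(\epss C^2)$, giving
\begin{equation*}
W(\uestatic(y)) = W(\uestatic(x)) + \chempote(\uestatic(x))\big(\uestatic(y)-\uestatic(x)\big) + O\big((y-x)^{1/2}\big) + O(\epss C^2).
\end{equation*}
Under the assumption $\vert\uestatic(y)-\uestatic(x)\vert\ge\epseps$, dividing through as in \eqref{faica} yields $\chempote(\uestatic(x)) = s(x,y) + O\big((y-x)^{1/2}/\epseps\big) + O(\epss C^2/\epseps)$, and then \eqref{trag} becomes
\begin{equation*}
W(\uestatic(x)) - W(\uestatic(z)) + s(x,y)\big(\uestatic(z)-\uestatic(x)\big) \le O\big((y-x)^{1/2}/\epseps\big) + O(\epss C^2/\epseps), \qquad z\in[x,y].
\end{equation*}

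Finally I would repeat the $\omega$-function argument verbatim: define $\psi$, $\mathcal I_\rho$ and $\omega$ exactly as before, fix $\epseps<\rho_0$ so $\omega(\epseps)\in(0,+\infty)$, choose $\delta=\delta(\epseps,c)$ so that $O(\delta^{1/2}/\epseps)\le\omega(\epseps)/4$ (this is where the $\delta$-dependence is $C$-free), and then choose $\eps_0=\eps_0(\epseps,c,C)$ so that $O(\epss C^2/\epseps)\le\omega(\epseps)/4$ for all $\eps<\eps_0$. Combining, for $\eps\in(0,\eps_0)$ the right-hand side above is $\le\omega(\epseps)/2$, contradicting the lower bound $\omega(\epseps)$ obtained from \eqref{eq:sopra} with $a=\uestatic(x)$, $b=\uestatic(y)$, provided $[\uestatic(x),\uestatic(y)]$ is not contained in $\guse$. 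This forces $[\uestatic(x),\uestatic(y)]\subseteq\guse$, and the same "extend past the max/min" argument as at the end of Lemma~\ref{lemmamatteo} upgrades this to $\uestatic(z)\in\guse$ for all $z\in[x_\eps,y_\eps]$, i.e.\ \eqref{cecibis}; otherwise \eqref{secondapossbis} holds. I expect no genuine obstacle here — the content is identical to Lemma~\ref{lemmamatteo}; the only point requiring care is the bookkeeping of the new $O(\epss C^2)$ term and the observation that it is absorbed by sending $\eps\to 0$ (hence a $C$-dependent threshold $\eps_0$) rather than by shrinking $\delta$ (which must stay $C$-independent so that the later application in Section~\ref{sec:newnewprel} can fix $\delta$ first and $C$ afterwards).
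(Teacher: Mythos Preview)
Your proposal is correct and follows essentially the same approach as the paper's own proof: both rerun the chain \eqref{perdi}--\eqref{trag} with the endpoint derivative terms $\tfrac{\epss}{2}({\uestatic}_x(x_\eps))^2$ and $\tfrac{\epss}{2}({\uestatic}_x(y_\eps))^2$ no longer vanishing but bounded by $O(\epss C^2)$, then split the error budget $\omega(\epseps)/2$ into a $\delta$-part (independent of $C$) and an $\eps_0$-part (absorbing the $O(\epss C^2)$). The only cosmetic difference is that the paper writes the new remainder uniformly as $O(C^2\epss)$ (absorbing the $1/\epseps$ factor into the implicit constant), whereas you track it as $O(\epss C^2/\epseps)$; both lead to the same choice of $\eps_0$.
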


\begin{proof}
The proof closely follows the proof of Lemma \ref{lemmamatteo}.
Set $x = x_\eps$ and $y=y_\eps$.
In the present situation, inequality \eqref{perdi} must be replaced by 
\begin{equation}\label{perdibis}
\begin{aligned}
\int_x^z \chempote(\uestatic)~ {\uestatic}_x ~dx
\leq W(\uestatic(z)) - W_\eps(\uestatic(x)) + O(\epss,C),
\end{aligned}
\end{equation}
and equality \eqref{lire} by 
\begin{equation}\label{lirebis}
\int_x^y \chempote(\uestatic)~ {\uestatic}_x ~dx= 
W(\uestatic(y)) - W_\eps(\uestatic(x)) + O(\epss,C),
\end{equation}
where the term $O(\epss,C)$ is actually of the form $O(C^2 \epss)$.
Following the same computations of Lemma \ref{lemmamatteo} 
we must now add on 
the right hand sides of \eqref{lireli}, \eqref{incre}, \eqref{bile},
\eqref{uncisipocrede}, \eqref{faica}, \eqref{eqer}
and \eqref{trag} a remainder term of the form $O(C^2 \epss)$.

Next
we take 
$\eps_0>0$ so that 
\begin{equation}\label{contrabis}
O(C^2 \epss) \leq \frac{\omega(\epseps)}{4}, 
\qquad \eps\in(0, \eps_0),
\end{equation}
and 
 $\delta >0$ so that 
\begin{equation}\label{contrater}
O(\delta^{1/2}/\epseps) \leq \frac{\omega(\epseps)}{4}.
\end{equation}
Then  \eqref{contra} transforms into
$$
O(C^2 \epss)
+ O(\delta^{1/2}/\epseps) \leq \frac{\omega(\epseps)}{2},
$$
and \eqref{eqer2} into
\begin{equation}\label{eqer2bis}
\begin{aligned}
& \max_{z \in [x,y]} \Big(
W(\uestatic(x))
-W(\uestatic(z)) +s(x,y)(\uestatic(z)-\uestatic(x))\Big)
\\
& \leq  O(\delta^{1/2}/\epseps) + O(C^2\epss) \leq \frac{\omega(\epseps)}{2}.
\end{aligned}
\end{equation}
Then the assertions of the lemma follow reasoning along the 
same lines as in the proof of Lemma \ref{lemmamatteo}.
\end{proof}

\begin{corollary}\label{cor:finale}
For any $\epseps>0$ and $c>0$ 
there exist $\eps_0>0$ and $\delta' >0$ 
such that,
if $(\uestatic) \subset \spazio$ is a
sequence of functions satisfying 
\begin{equation}\label{eq:unifenergylemmareprise}
\Fe(\uestatic) + \vert
\nabla \Fe\vert(\uestatic) \leq c, \qquad \eps \in (0,\eps_0),
\end{equation}
and $x \in \toro$ is such that 
$$
{\rm dist}(\uestatic(x),
\gus) \geq 2\epseps, \qquad \eps \in (0,\eps_0),
$$
then 
$$
{\rm dist}\big(\uestatic(y),
\gus\big) \ge \epseps, \qquad y \in (x-\delta', x+\delta'), \ \eps \in 
(0,\eps_0).
$$
%
%\begin{itemize}
%\item[-] ${\rm dist}(\ue(y),
%\Sigma) >0$ for any $y \in (x-\delta, x+\delta)$,
%\item[-] $\ue \in H^1((x-\delta, x+\delta))$.
%\end{itemize}
\end{corollary}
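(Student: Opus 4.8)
The plan is to deduce Corollary \ref{cor:finale} from Lemma \ref{lemmamatteoimprove} by a contradiction argument that chooses the auxiliary points $x_\eps,y_\eps$ cleverly. Fix $\epseps>0$ and $c>0$, and apply Lemma \ref{lemmamatteoimprove} with $\epseps$ replaced by $\epseps$ itself (or a fraction of it) and with a constant $C$ that I still have to pin down; this produces $\delta=\delta(\epseps,c)>0$. I then set $\delta':=\delta/2$ and keep $\eps_0$ to be shrunk finitely many times along the way. Suppose the conclusion fails: there is $x\in\toro$ with ${\rm dist}(\uestatic(x),\gus)\ge 2\epseps$ for all $\eps\in(0,\eps_0)$, yet there is a point $y\in(x-\delta',x+\delta')$ with ${\rm dist}(\uestatic(y),\gus)<\epseps$. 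Then the oscillation of $\uestatic$ between $x$ and $y$ is at least $\epseps$, and moreover the segment $[\uestatic(x),\uestatic(y)]$ (or $[\uestatic(y),\uestatic(x)]$) is \emph{not} contained in $\guse$ once $\eps$ is small, because one endpoint stays at distance $\ge 2\epseps$ from $\gus$ while the other is within $\epseps$: this is exactly the configuration that Lemma \ref{lemmamatteoimprove} forbids, \emph{provided} the hypothesis (ii) on the derivatives at the endpoints can be arranged.

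The issue is that $x$ and $y$ are arbitrary points of $\toro$, not critical points of $\uestatic$, so I cannot directly feed them into the lemma. The fix is to use the uniform bound \eqref{eq:unifenergylemmareprise}, which via Remark \ref{pal} and Lemma \ref{lemmazero} gives $\uestatic\in H^3(\toro)$ with $\|\uestatic\|_{L^\infty}\le M$ and $\|\chempote(\uestatic)_x\|_{L^2}\le c$, and hence (since $W''(\uestatic)$ is bounded on $[-M,M]$ and $\epss\le 1$) a uniform $L^2$-bound on ${\uestatic}_{xxx}$; interpolating, $\|{\uestatic}_x\|_{L^2(\toro)}$ and in fact $\|{\uestatic}_{xx}\|_{L^2(\toro)}$ are bounded uniformly in $\eps$. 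From such a bound one gets, by the mean value theorem on a short interval, a constant $C=C(c)>0$ and, for each small $\eps$, points $x_\eps$ near $x$ and $y_\eps$ near $y$ with $|{\uestatic}_x(x_\eps)|\le C$, $|{\uestatic}_x(y_\eps)|\le C$ — for instance, averaging ${\uestatic}_x$ over a subinterval of length $\delta/4$ inside $(x-\delta',x)$ and using that the average of $|{\uestatic}_x|$ there is controlled by $\|{\uestatic}_x\|_{L^2}(\delta/4)^{-1/2}$, so some point realizes a value below $C:=4\delta^{-1/2}c$. Taking the interval short enough that $0<y_\eps-x_\eps\le\delta$ while $|\uestatic(y_\eps)-\uestatic(x_\eps)|$ is still $\ge\epseps/2$ (using uniform continuity of $\uestatic$ via the $H^3$-bound, after possibly halving $\epseps$ at the start), I land precisely in the hypotheses of Lemma \ref{lemmamatteoimprove} with this $C$.

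Now apply Lemma \ref{lemmamatteoimprove}: it yields $\eps_0=\eps_0(\epseps,c,C)>0$ — which, since $C$ depends only on $c$ and $\delta=\delta(\epseps,c)$, depends only on $\epseps$ and $c$, as required — such that for $\eps<\eps_0$ either \eqref{cecibis} holds, i.e. $\uestatic(z)\in\guse$ for all $z\in[x_\eps,y_\eps]$, or \eqref{secondapossbis} holds, i.e. $|\uestatic(y_\eps)-\uestatic(x_\eps)|<\epseps/2$. The second alternative contradicts our choice $|\uestatic(y_\eps)-\uestatic(x_\eps)|\ge\epseps/2$. For the first alternative: $x_\eps$ lies within $\delta'$ of $x$ plus the short averaging interval, so for $\eps$ small (and $\delta$ small relative to $\epseps$, which costs nothing since we may shrink $\delta$ further using uniform continuity) we have $|\uestatic(x_\eps)-\uestatic(x)|<\epseps$, whence ${\rm dist}(\uestatic(x_\eps),\gus)\ge 2\epseps-\epseps=\epseps>0$, so $\uestatic(x_\eps)\notin\overline{\gus}\supseteq$ the closure of $\guse$ cannot hold — wait, rather $\guse\subseteq\{\,\mathrm{dist}(\cdot,\gus)<\epseps\,\}$, so $\uestatic(x_\eps)\in\guse$ would force ${\rm dist}(\uestatic(x_\eps),\gus)<\epseps$, again a contradiction. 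Either way we contradict the failure hypothesis, so the corollary holds with this $\delta'$ and $\eps_0$.

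The main obstacle is the bookkeeping of the several small parameters — halving $\epseps$ to absorb the passage from $x,y$ to the nearby critical-ish points $x_\eps,y_\eps$, shrinking $\delta$ so that $y_\eps-x_\eps\le\delta$ and simultaneously the oscillation survives, and checking that $C$ ends up depending only on $(\epseps,c)$ so that $\eps_0$ does too — together with extracting the uniform $H^2$- (or $W^{1,\infty}$-via-$H^2$) bound on $\uestatic$ from \eqref{eq:unifenergylemmareprise} that makes the ``nearby point with small derivative'' step legitimate. None of these steps is deep; the content has already been done in Lemma \ref{lemmamatteoimprove}, and here one is only transporting its conclusion from critical points to all points of $\toro$ using the uniform regularity of the sequence.
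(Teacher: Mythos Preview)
Your overall strategy---reduce to Lemma~\ref{lemmamatteoimprove} by manufacturing nearby points with controlled derivative---is the same as the paper's, but the way you manufacture those points has a genuine gap. You claim that from \eqref{eq:unifenergylemmareprise} one deduces a \emph{uniform} $L^2$-bound on ${\uestatic}_{xxx}$ (and then, by interpolation, on ${\uestatic}_x$, ${\uestatic}_{xx}$), and later invoke ``uniform continuity of $\uestatic$ via the $H^3$-bound'' to compare $\uestatic(x_\eps)$ with $\uestatic(x)$. These bounds are not uniform in $\eps$: from $\Fe(\uestatic)\le c$ one only gets $\|{\uestatic}_x\|_{L^2}\le\sqrt{2c}/\eps$, and from $\epss{\uestatic}_{xxx}=W''(\uestatic){\uestatic}_x-{\chempote(\uestatic)}_x$ one gets $\|{\uestatic}_{xxx}\|_{L^2}=O(\eps^{-3})$. (Think of a transition layer of width $\eps$.) So your proposed $C:=4\delta^{-1/2}\|{\uestatic}_x\|_{L^2}$ is really of order $1/\eps$, and then the error term $O(C^2\epss)$ hidden in Lemma~\ref{lemmamatteoimprove} does not vanish. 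Remark~\ref{rem:g} warns precisely against this: the $H^3$-regularity is $\eps$-dependent and gives no $\eps$-free modulus of continuity.

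The paper repairs this with a different, elementary device that uses only the uniform $L^\infty$-bound $\|\uestatic\|_{L^\infty}\le M$ from Lemma~\ref{lemmazero}. One picks the auxiliary points \emph{on either side of} $x$: since $\uestatic$ varies by at most $2M$ on each of the intervals $(x-\delta/2,x-\delta/6)$ and $(x+\delta/6,x+\delta/2)$, the mean value theorem yields $x_1,x_2$ there with $|{\uestatic}_x(x_i)|\le 6M/\delta=:C$, a constant depending only on $(\epseps,c)$. With $x$ trapped in $(x_1,x_2)$, alternative \eqref{cecibis} is ruled out at $z=x$, forcing $|\uestatic(x_1)-\uestatic(x_2)|<\epseps$; a second application of Lemma~\ref{lemmamatteoimprove} to the max/min of $\uestatic$ on $[x_1,x_2]$ (which are either interior critical points or $x_1,x_2$ themselves, so (ii) still holds with the same $C$) bounds the full oscillation on $[x_1,x_2]$ by $\epseps$, giving the conclusion with $\delta'=\delta/6$. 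The key point you are missing is that the bounded-derivative points come from the $L^\infty$ bound via the mean value theorem, not from any Sobolev bound on derivatives.
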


\begin{proof}
By Lemma \ref{lemmazero} there exists $M=M(c)$ such that 
$\sup_{\eps \in (0,1]}\|\uestatic\|_{L^\infty(\toro)}\le M$.
Letting $\delta = \delta(\epseps,c)$ 
be as in Lemma \ref{lemmamatteoimprove}, 
there exist $x_1\in (x-\delta/2,x-\delta/6)$ and $x_2\in (x+\delta/6,x+\delta/2)$ such that 
$|{\uestatic}_x(x_1)|,|{\uestatic}_x(x_1)|\le C:=6M/\delta$. By Lemma \ref{lemmamatteoimprove} there exists $\eps_0$ such that,
if $\eps \in (0,\eps_0)$, then  $|\uestatic(x_1)-
\uestatic(x_2)|<\epseps$. 
We now claim that 
\begin{equation}\label{eqdist}
{\rm dist}(\uestatic(y),
\gus)\ge \epseps \qquad  {\rm for\ all\ }y\in [x_1,x_2], 
\end{equation}
which implies the thesis 
since $(x-\delta', x+\delta')\subset (x_1,x_2)$, with $\delta'=\delta/6$. 
Indeed, letting $y_1$ (resp. $y_2$) be a minimum point (resp. a maximum point) of $\uestatic$ on $[x_1,x_2]$,
again by Lemma \ref{lemmamatteoimprove} we have 
$|\uestatic(y_1)-\uestatic(y_2)|<\epseps$ so that 
\[
|\uestatic(y)-\uestatic(x)|\le |\uestatic(y_1)-\uestatic(y_2)|<\epseps
\]
for all $y\in [x_1,x_2]$, which gives \eqref{eqdist}.
\end{proof}

In general we cannot expect the limit function $\ustatic$ 
to be continuous. Nevertheless, we can prove the following 
results. Recall the definition of $\Sigma_1,\dots,\Sigma_\ell$
given in Section \ref{subsec:thepot}.

\begin{corollary}\label{cor:berlus}
Let $(\uestatic) \subset \spazio$ be a sequence satisfying
the uniform bound \eqref{eq:unifenergy}
and let $\ustatic \in \spazio$ be such that
\begin{equation}\label{eq:uuuubis}
\lim_{\eps
  \downarrow 0} \uestatic = \ustatic \qquad {\rm in}~ \spazio.
\end{equation}
Then the set 
$$
\Omega:=\{x \in \toro: \ustatic(x) \notin \gus\}
$$
has an open Lebesgue representative, and 
\begin{equation}\label{eqlim}
{\rm ess}\!\!\!\!\!\!\!\!\!\lim_{\Omega\ni x\to \overline x\in\partial\Omega} 
~{\rm dist}(\ustatic(x),\gus)=0 .
\end{equation}
Moreover, the sets
$$
C_i := \Big\{x \in \toro : \ustatic(x) \in \overline \Sigma_i\Big\}, 
\qquad i=1,\dots,\ell,
$$
have closed Lebesgue representatives and 
\begin{equation}\label{distpos}
{\rm dist}(C_i, C_j) >0, \qquad i,j = 1,\dots, \ell, \  i \neq j.
\end{equation}
\end{corollary}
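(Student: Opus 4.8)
The plan is to deduce everything from Corollary \ref{cor:finale}, which is the key quantitative input: it says that having $\uestatic$ bounded away from $\gus$ by $2\epseps$ at a point propagates to being bounded away by $\epseps$ on a whole interval $(x-\delta',x+\delta')$, with $\delta'$ independent of $\eps$. First I would show $\Omega$ has an open representative. For $\epseps>0$ let $\Omega_\epseps:=\{x\in\toro:\operatorname{dist}(\ustatic(x),\gus)>2\epseps\}$ (up to the a.e.\ identification of $\ustatic$), so that $\Omega$ is the increasing union of the $\Omega_\epseps$ as $\epseps\downarrow 0$. Since $\uestatic\to\ustatic$ in $\spazio$, up to a subsequence $\uestatic\to\ustatic$ a.e.; so if $x$ is a Lebesgue point with $\operatorname{dist}(\ustatic(x),\gus)>2\epseps$, then (by the a.e.\ convergence and the fact that nearby Lebesgue points can be chosen with values close to $\ustatic(x)$) one finds, for a.e.\ $x'$ in a small neighbourhood, $\operatorname{dist}(\uestatic(x'),\gus)\ge 2\epseps$ for all small $\eps$; applying Corollary \ref{cor:finale} at such $x'$ and passing to the limit $\eps\downarrow 0$ gives $\operatorname{dist}(\ustatic(y),\gus)\ge\epseps$ on $(x'-\delta',x'+\delta')$, hence on a neighbourhood of $x$. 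Thus $\Omega_\epseps$ coincides a.e.\ with an open set, and so does $\Omega=\bigcup_\epseps\Omega_\epseps$.

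For the limit relation \eqref{eqlim}, the same mechanism runs in contrapositive form. Suppose $\overline x\in\partial\Omega$ and, for contradiction, that $\operatorname{dist}(\ustatic(x),\gus)$ does not tend to $0$ along an essential sequence $x\to\overline x$ in $\Omega$: then there is $\epseps>0$ and a positive-measure set of points $x$ arbitrarily close to $\overline x$ with $\operatorname{dist}(\ustatic(x),\gus)>2\epseps$. By the argument above each such $x$ lies in the open interior of $\Omega_\epseps$ together with a fixed-size interval $(x-\delta',x+\delta')$, with $\delta'=\delta'(\epseps,c)$ independent of everything else. Choosing $x$ with $|x-\overline x|<\delta'$ forces $\overline x\in\Omega_\epseps\subseteq\Omega$, contradicting $\overline x\in\partial\Omega$ (and the openness just proved). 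Hence \eqref{eqlim} holds.

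Finally, for the sets $C_i$: since $\overline\Sigma_1,\dots,\overline\Sigma_\ell$ are the closures of the finitely many connected components of the bounded open set $\gus$, they are disjoint compact sets, so $d_0:=\min_{i\ne j}\operatorname{dist}(\overline\Sigma_i,\overline\Sigma_j)>0$. Each $C_i=\{x:\ustatic(x)\in\overline\Sigma_i\}$ is, by the openness of $\Omega$ just established and \eqref{eqlim}, the complement in $\toro$ of an open set (namely $\Omega$ together with the open sets $\{x:\ustatic(x)\in\operatorname{int}\text{ of other components' neighbourhoods}\}$); more directly, one shows $C_i$ has a closed representative by the same Corollary \ref{cor:finale} argument applied with the target set being the $\epseps$-neighbourhood of $\R\setminus\overline\Sigma_i$, i.e.\ a point with $\ustatic(x)$ well inside $\overline\Sigma_i$ has an interval of points whose $\uestatic$-values stay inside $\Sigma_i^{\epseps}$, hence whose $\ustatic$-values lie in $\overline\Sigma_i$. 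To get \eqref{distpos}, pick $\epseps<d_0/4$ and apply Corollary \ref{cor:finale} (with $\gus$ replaced by $\R\setminus(\overline\Sigma_i)^{2\epseps}$, an admissible set since $\uestatic$ restricted near $C_i$ has values near $\overline\Sigma_i$): it produces a uniform $\delta'>0$ so that a point in $C_i$ cannot sit within distance $\delta'$ of a point in $C_j$, $j\ne i$, because the values $\uestatic$ would have to jump from a $2\epseps$-neighbourhood of $\overline\Sigma_i$ to one of $\overline\Sigma_j$ across an interval shorter than $\delta'$, contradicting the smallness \eqref{secondapossbis}. The main obstacle is the bookkeeping in the first step: carefully passing from the a.e.\ pointwise statement about $\uestatic$ to an honest open-set statement about the a.e.-defined limit $\ustatic$, i.e.\ checking that the $\epseps$-level sets of $\ustatic$ agree (up to null sets) with genuinely open sets, using only a.e.\ convergence of $\uestatic$ and the uniform interval size $\delta'$ from Corollary \ref{cor:finale}.
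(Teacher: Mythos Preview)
Your argument for the openness of $\Omega$ and for \eqref{eqlim} follows the same line as the paper's, with Corollary \ref{cor:finale} as the key input. One technical point: you assert that $\spazio$-convergence of $\uestatic$ to $\ustatic$ gives a.e.\ convergence along a subsequence, but this does not follow from $H^{-1}$-convergence alone. The paper instead uses that each $\uestatic$ is continuous (Remark \ref{pal}) together with convergence of local averages: if $x$ is a Lebesgue point of $\ustatic$, then on a small interval the average of $\uestatic$ is close to $\ustatic(x)$ for small $\eps$, so by the intermediate value theorem there is $x_\eps$ near $x$ with $\uestatic(x_\eps)$ close to $\ustatic(x)$. This is precisely what is needed to feed into Corollary \ref{cor:finale}.

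Your treatment of the sets $C_i$ and of \eqref{distpos} has a genuine gap. You propose to apply Corollary \ref{cor:finale} ``with $\gus$ replaced by $\R\setminus(\overline\Sigma_i)^{2\epseps}$'', but Corollary \ref{cor:finale} (via Lemma \ref{lemmamatteoimprove}) depends on $\gus$ specifically, through the function $\omega$ built from $W$ in \eqref{eq:sopra}; you cannot substitute an arbitrary set. Likewise, your ``more direct'' argument for closedness of $C_i$ fails because a point with $\ustatic(x)\in\overline\Sigma_i$ has ${\rm dist}(\ustatic(x),\gus)=0$, so the hypothesis of Corollary \ref{cor:finale} is never met there. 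The paper's route is cleaner: since $\cup_i C_i=\toro\setminus\Omega$ is already closed, it suffices to prove \eqref{distpos}. If by contradiction $\overline x\in\overline{C_i}\cap\overline{C_j}$ with $i\neq j$, then in any neighbourhood of $\overline x$ the continuous function $\uestatic$ takes values close to both $\overline\Sigma_i$ and $\overline\Sigma_j$; by the intermediate value theorem there is $x_\eps$ near $\overline x$ with $\uestatic(x_\eps)$ in the gap between these components, i.e.\ with ${\rm dist}(\uestatic(x_\eps),\gus)$ bounded below by a fixed positive constant. The openness argument from the first part then produces an interval of fixed size $\delta'$ around $x_\eps$ contained in $\Omega$, forcing $\overline x\in\Omega$, a contradiction.
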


\begin{proof}
Let $x\in\Omega$ be a Lebesgue point of $\staticu$ such that ${\rm dist}(\staticu(x), 
\gus)\ge 3\epseps>0$. Letting $\delta'>0$ be as
in Corollary \ref{cor:finale}, for all $\eps>0$ small enough there exists $x_\eps\in (x-\delta'/2,x+\delta'/2)$
such that $|\uestatic(x_\eps)-\staticu(x)|<\epseps$, 
so that ${\rm dist}(\uestatic(x_\eps), 
\gus)\ge 2\epseps$.
By Corollary \ref{cor:finale} it follows ${\rm dist}(\uestatic(y), 
\gus)\ge \epseps$ for all 
$y\in (x_\eps-\delta',x_\eps+\delta')\supset(x-\delta'/2,x+\delta'/2)$, which in turn implies
$${\rm dist}(\staticu(y), \gus)\ge \epseps \qquad {\rm for\ all\ }y\in (x-\delta'/2,x+\delta'/2).
$$
It follows that 
\[
(x-\delta'/2,x+\delta'/2)\subset\Omega
\]
and \eqref{eqlim} holds.
The assertion concerning the sets $C_i$ can be proved similarly. 
Indeed, 
since $\cup_{i=1}^\ell C_i=\toro\setminus \Omega$ has a closed representative,
 it is enough 
to show \eqref{distpos}.
Assume by contradiction there exists $\overline x \in \overline 
C_i \cap \overline C_j$. In this case, in a neighbourhood of $\overline x$ we can find
points $x_\eps$  
such that $\uestatic(x_\eps) \notin \gus$, for $\eps>0$ small enough. Reasoning as 
above, 
this implies $\staticu(x_\eps)\in\Omega$, thus
leading to a contradiction.
\end{proof}

%%%%%%%%%%%%%%%%%%%%%%%%%%%%%%%%%%%%%%%%%%%%%%%%%%%%%%%%%%%%%%%%%%%%%
\section{Proof of Theorem~\ref{p:gammaslope}}\label{sec:newnewprel}
%%%%%%%%%%%%%%%%%%%%%%%%%%%%%%%%%%%%%%%%%%%%%%%%%%%%%%%%%%%%%%%%%%%%%
We are now in a position to conclude the proof of Theorem \ref{p:gammaslope}. 
Let $\uestatic\to \ustatic$ in $\spazio$ as $\eps \downarrow 0$, 
and choose a subsequence $(\eps_k)\subset (0,1)$ such that 
$$
\lim_{k \to +\infty} \
\vert \grad F_{\eps_k}\vert (\uekstatic) = 
\liminf_{\eps \downarrow 0} \vert \grad F_{\eps}\vert (\uestatic) 
$$
and  
$$
\sup_{k\in \mathbb N} \Big(\Fek(\uekstatic) + 
\vert \grad F_{\eps_k}\vert (\uekstatic)\Big) <+\infty.
$$
Recalling \eqref{punto} we have
\begin{equation}\label{ottokm}
\begin{split}
  & \lim_{k \to +\infty} |\nabla F_{\eps_k}|(\uekstatic) =
\lim_{k \to +\infty}
~  \sup_{\varphi \in H^1(\toro)} \int_{\toro}\Big( 2 
\chempotek
(\uekstatic)_{xx}~
\varphi - (\varphi_x)^2\Big)~dx
\\ \geq & 
\sup_{\varphi \in H^1(\toro)}~\limsup_{k \to +\infty}
 \int_{\toro}\left( 2  \chempotek
(\uekstatic)_{xx}
~\varphi - (\varphi_x)^2\right)~dx.
\end{split}
\end{equation}
Since $(\uekstatic)$ converges to $\ustatic$ in $\spazio$ as $k \to +\infty$, 
we have at our disposal a corresponding measure $\mu$ given by Lemma \ref{l:mu}.  
Using Lemma \ref{l:mu} (d), from \eqref{ottokm} and \eqref{eq:uu} we have
\begin{equation*}
\begin{split}
   \lim_{k \to +\infty} |\nabla F_{\eps_k}|(\uekstatic)
\geq & 
  \sup_{\varphi \in H^1(\toro)} \int_{\toro}\left( 
- 2 (\mu(W'))_x ~ \varphi_x - 
(\varphi_x)^2\right)~dx
\\ 
  =&
\|
(\mu(W'))_x 
\|_{L^2(\toro)}^2
\end{split}
\end{equation*}
(recall from Lemma \ref{l:mu} (d) that $(\mu(W'))_x \in L^2(\toro)$).

\medskip

We now want to show that 
\begin{equation}\label{wenowwa}
\|(\mu(W'))_x \|_{L^2(\toro)}^2 \geq
\|({W^{**}}'(\ustatic))_x \|_{L^2(\toro)}^2.
\end{equation}
Let us define 
$$
\Omega := \Big\{x \in \toro: \ustatic(x) \notin \ovgus\Big\}.
$$
In order to prove \eqref{wenowwa}, 
we will show that ${W^{**}}'(\ustatic)  = \mu (W')$ in $\Omega$,
and that ${W^{**}}'(\ustatic)$ is constant on the connected components 
of $\toro\setminus \Omega$.

By Corollary \ref{cor:finale}, it follows that $\Omega$ has an open 
Lebesgue representative (still denoted by $\Omega$) and 
that, for any $i =1,\dots,\ell$, the set
$$
C_i := \Big\{x \in \toro : \ustatic(x) \in \overline \Sigma_i\Big\}
$$
has a closed Lebesgue representative (still denoted by $C_i$).
Then, 
by Lemma~\ref{l:youngdelta},
$$
\mu_x(W')=W'(\ustatic(x)) = W^{**~\!\prime}(\ustatic(x)) \qquad {\rm for~a.e.~}
x \in \Omega.
$$
Hence, being $\mu_x(W') \in H^1(\toro)$, we get 
$$
{W^{**}}'(\ustatic) 
\in H^1(\Omega).
$$
In particular ${W^{**}}'(\staticu)$ is uniformly continuous on $\Omega$, and 
can be continuously  extended to $\overline \Omega$.
Moreover, for all $\overline x\in\partial\Omega$, from \eqref{eqlim} 
one gets  
that if $x \in \Omega \to \overline x$, then ${\rm dist}(\ustatic(x), \gus)\to 0$,
and 
$$
\lim_{\Omega\ni x \to \overline x,\, x \in \Omega} 
{W^{**}}'(\ustatic(x)) 
\in {W^{**}}'\left(\ovgus\right).
$$
Recalling \eqref{distpos} and the fact that 
${W^{**}}'(\ustatic)$ is locally constant outside $\Omega$, 
it follows 
$$
{W^{**}}'(\ustatic) 
\in H^1(\toro),
$$
and in addition
$$
\Vert ({W^{**}}'(\ustatic))_x\Vert_{L^2(\toro)} = 
\Vert ({W^{**}}'(\ustatic))_x\Vert_{L^2(\Omega)}.
$$
We then have
$$
\begin{aligned}
\lim_{k\to +\infty} |\nabla F_{\eps_k}|(\uekstatic) \ge & 
    \| (\mu(W'))_x \|_{L^2(\bb T)}^2
   \ge \|(\mu(W'))_x \|_{L^2(\Omega)}^2
=
\|(W^{**~\!\prime}(\ustatic))_x \|_{L^2(\Omega)}^2 \\
  = &  \|(W^{**~\!\prime}(u))_x \|_{L^2(\bb T)}^2 =|\nabla \F|(\ustatic).
   \end{aligned}
$$
\qed

%%%%%%%%%%%%%%%%%%%%%%%%%%%%%%%%%%%%%%%%%%%%%%%%%%%%%%%%%%%%%%%%%%%%%%
\section{Proof of Theorem~\ref{teo:convergence}}\label{sec:obw}
%%%%%%%%%%%%%%%%%%%%%%%%%%%%%%%%%%%%%%%%%%%%%%%%%%%%%%%%%%%%%%%%%%%%%%
With Theorem \ref{p:gammaslope} at hand, we can prove 
our main convergence result, Theorem \ref{teo:convergence}. 
We will use the standard notation $f(t)(x) = f(t,x)$ for a function
$f \in \mathcal C^0([0,T]; \toro)$.
  
Since $(\Fe(\gradflowue))$ is bounded  by \eqref{eq:gf1} in $[0,T]\times
\toro$, and $W$
has at least linear growth at infinity,
the sequence $(\gradflowue)$ is
uniformly bounded in $L^\infty([0,T]; L^1(\toro))$.
Hence 
$(\gradflowue)$ is bounded in 
$L^\infty([0,T]; \spazio)$ 
and in particular in
$L^2([0,T]; \spazio)$, 
since the subspace of all functions in $L^1(\toro)$ with 
mean $m$ (compactly) embeds in $\spazio$.
%\footnote{
%Given two Banach spaces $X$, $Y$, then $X$ compactly embeds in $Y$ if and
%only if 
%the dual $Y^*$ of $Y$ compactly embeds in the dual $X^*$ of $X$.
%Hence
%$L^1(\toro)$ isometrically embeds in  $(L^1(\toro))^{**} = (L^\infty(\toro))^*$
%which compactly embeds in  $H^{-1}(\toro)$ which is isomorphic 
%to $(H^1(\toro))^*$.}
Using once more \eqref{eq:gf1} it follows that 
$$
(\gradflowue)~ {\rm is~uniformly~bounded~in~}
H^1([0,T]; \spazio).
$$
Let $(\gradflowuek)$ be a subsequence weakly converging 
in $H^1([0,T]; \spazio)$ to some function $\some$. 
{}From Ascoli-Arzela's theorem in  $H^1([0,T]; \spazio)$,
it follows that $(\uek)$ has a further (not relabelled) subsequence
converging to $w$ in $\mathcal C^0([0,T]; \spazio)$. 
Hence
\begin{equation}\label{segnalato}
\lim_{k\to \infty}\gradflowuek(t) = \some(t) \qquad \forall t\in [0,T]
\end{equation}
\medskip
and 
in particular, recalling \eqref{eq:convetempozero},
\begin{equation}\label{limiteinteso}
\inidatu = \lim_{k\to \infty} \gradflowuek(0) = \some(0).
\end{equation}
\medskip

We now want to show that $w = u$, and to do this
we follow the proof of \cite[Theorem 1]{Serfaty}. 
By assumption \eqref{limsuptempozero}, and remembering
\eqref{eq:gf1}, for any $t \in [0,T]$ we have 
\begin{equation}\label{piano}
\begin{aligned}
{\rm I}:= & \lim_{k\to +\infty}
\left(
\Fek(\gradflowuek(t))+ \frac{1}{2} 
       \int_0^t \|\partial_t \gradflowuek(s)\|_{-1}^2\,ds  
    + \frac{1}{2} \int_0^t |\nabla \Fek|^2(\gradflowuek(s)) \,ds\right)
\\
= & \lim_{k \to \infty} \Fek(\inidatue) =  \F(\inidatu).
\end{aligned}
\end{equation}
On the other hand, 
\begin{equation}\label{piove}
\begin{aligned}
{\rm I}
~ \geq~ & 
\liminf_{k\to +\infty}
\Fek(\gradflowuek(t))
\\
+&  
\liminf_{k \to +\infty}
\frac{1}{2} 
       \int_0^t \|\partial_t \gradflowuek(s)\|_{-1}^2~ds  
    \\
+ &  
\liminf_{k \to +\infty}
\frac{1}{2} \int_0^t |\nabla F_{\eps_k}|^2(\gradflowuek(s)) ~ds.
\end{aligned}
\end{equation}
Applying \eqref{segnalato} and the lower semicontinuity
of $\F$,
it follows
\begin{equation}\label{qui}
\liminf_{k \to +\infty}
\Fek(\gradflowuek(t)) 
\geq
\liminf_{k \to +\infty}
\F(\gradflowuek(t)) 
\geq \F(\some(t)).
\end{equation}
{}From Fatou's Lemma and Theorem \ref{p:gammaslope} 
we have
\begin{equation}\label{nellaforesta}
\liminf_{k \to +\infty}
\int_0^t |\nabla \F|^2(\gradflowuek(s)) ~ds
\geq 
\int_0^t |\nabla \F|^2(\some(s)) ~ds.
\end{equation}
{}From the lower semicontinuity of the norm, and using again 
Fatou's lemma, 
we have 
\begin{equation}\label{nera}
\liminf_{k \to +\infty}
       \int_0^t \|\partial_t \gradflowuek(s)\|_{-1}^2~ds  
\geq 
       \int_0^t \|\partial_t \some(s)\|_{-1}^2~ds.
%\qquad t \geq 0.
\end{equation}
Collecting together inequalities 
\eqref{qui}, \eqref{nellaforesta} and \eqref{nera}, 
from \eqref{piove} and \eqref{piano} we infer
\begin{equation}\label{colle}
\F(\inidatu) \geq  
\F(\some(t))+
\frac{1}{2} 
          \int_0^t \|\partial_t \some(s)\|_{-1}^2 ~ds  
        + 
\frac{1}{2} \int_0^t |\nabla \F|^2(\some(s)) ~ ds. 
\end{equation}
On the other hand
we have, using \eqref{limiteinteso}, 
\begin{equation}
\begin{aligned}
& \frac{1}{2} 
          \int_0^t \|\partial_t \some(s)\|_{-1}^2~ds  
        + \frac{1}{2} \int_0^t |\nabla \F|^2(\some(s)) ~ds
\geq - \int_0^t \langle \some_t,  \nabla \F(\some)
\rangle_{\mathcal H^{-1}(\toro)} ~ds
\\
&= - \int_0^t \frac{d}{ds} \F(\some(s))~ds 
= F(\some(0)) - F(\some(t))
= F(\inidatu) - F(\some(t)),
\end{aligned}
\end{equation}
which is the reverse inequality of \eqref{colle}.
Therefore
$$
\F(\inidatu) =
\F(\some(t))+
\frac{1}{2} 
          \int_0^t \|\partial_t \some(s)\|_{-1}^2~ds  
        + 
\frac{1}{2} \int_0^t |\nabla \F|^2(\some(s)) ~ds \qquad 
\forall t\geq 0.
$$
Then $\some$ is the gradient flow of $\F$ 
starting from $\inidatu$, hence
$
\some =\gradflowu.
$
In particular, the whole sequence $(\gradflowue)$ 
converges to $\gradflowu$ and the proof is concluded.
\qed

%%%%%%%%%%%%%%%%%%%%%%%%%%%%%%%%%%%%%%%%%%%%%%%%%%%%%%%%%%%
\appendix\section{}
%%%%%%%%%%%%%%%%%%%%%%%%%%%%%%%%%%%%%%%%%%%%%%%%%%%%%%%%%%%
For completeness, in this appendix
we quickly prove here a $\Gamma$-convergence result 
concerning the functionals $\Fe$. This result
is unnecessary for the proof of Theorem \ref{teo:convergence}.

\begin{proposition}[{\bf $\Gamma$-limit of $\Fe$}]
\label{l:mm}
The sequence $(\Fe)$ $\Gamma$-converges to $\F$ in $\spazio$ as 
 $\eps \downarrow 0$.
\end{proposition}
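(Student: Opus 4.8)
The plan is to prove the two standard ingredients of $\Gamma$-convergence: the $\Gamma$-$\liminf$ inequality and the existence of a recovery sequence. Throughout, convergence is in $\spazio$, i.e. in the $H^{-1}(\toro)$-topology on the affine slice of mean $m$.

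\textbf{$\Gamma$-$\liminf$ inequality.} Let $\ustatic_\eps \to \ustatic$ in $\spazio$; I must show $\liminf_{\eps\downarrow 0}\Fe(\ustatic_\eps) \ge \F(\ustatic)$. I may assume the $\liminf$ is finite and, passing to a subsequence, that it is a genuine limit with $\sup_\eps \Fe(\ustatic_\eps) < +\infty$; in particular $\sup_\eps \int_\toro W(\ustatic_\eps)\,dx < +\infty$. Since $W$ is coercive (assumption i)), this gives a uniform $L^1$-bound and, by the interpolation/compactness argument already used for $\spazio$, allows me to pass to a Young measure $\nu = (\nu_x)_{x\in\toro}$ with $\ustatic_\eps \to \nu(\imath)$ in the appropriate weak sense and $\nu(\imath) = \ustatic$ (this is exactly the construction in Lemma~\ref{l:mu}(b),(c), now without the slope bound). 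Then
\begin{equation*}
\liminf_{\eps\downarrow 0}\Fe(\ustatic_\eps) \ge \liminf_{\eps\downarrow 0}\int_\toro W(\ustatic_\eps)\,dx \ge \int_\toro \nu_x(W)\,dx \ge \int_\toro W^{**}\big(\nu_x(\imath)\big)\,dx = \int_\toro W^{**}(\ustatic)\,dx = \F(\ustatic),
\end{equation*}
where the first inequality drops the nonnegative gradient term, the second is weak lower semicontinuity along the Young-measure convergence (Fatou), and the third is Jensen's inequality applied to the convex function $W^{**}$ together with $W \ge W^{**}$.

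\textbf{Recovery sequence.} Let $\ustatic \in \spazio$ with $\F(\ustatic) < +\infty$, so $\staticu \in L^1(\toro)$ with $W^{**}(\staticu) \in L^1(\toro)$. I want $\ustatic_\eps \to \ustatic$ in $\spazio$ with $\int_\toro \ustatic_\eps\,dx = m$ and $\limsup_\eps \Fe(\ustatic_\eps) \le \F(\ustatic)$. The construction is in two stages. First, reduce to the convex regime: on the set where $\staticu(x) \in \ovgus$, $W^{**}$ is affine on each closure $\overline\Sigma_i$, and one can locally replace $\staticu$ by a two-valued function taking the endpoint values of $\Sigma_i$ with the appropriate weights (a fine mixture), without changing the mean and with $\int W^{**}$ unchanged in the limit; where $\staticu(x) \notin \ovgus$ one keeps $\staticu$, since there $W = W^{**}$. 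This produces, for each $\eta>0$, a function $\staticu^\eta$ taking values in $\{W = W^{**}\}$ with $\|\staticu^\eta - \staticu\|_{-1} \le \eta$, mean $m$, and $\int_\toro W(\staticu^\eta)\,dx \le \F(\ustatic) + \eta$; a diagonal argument then lets me assume from the start $W(\staticu) \in L^1(\toro)$. Second, for such $\staticu$ mollify: set $\ustatic_\eps := \staticu * \rho_{r(\eps)}$ with $r(\eps)$ chosen so that $\eps^2 \int_\toro (\ustatic_\eps)_x^2\,dx \to 0$ (possible since $\|(\staticu*\rho_r)_x\|_{L^2}$ blows up only polynomially in $1/r$, so one picks $r(\eps)$ decaying slowly relative to $\eps$) while still $\ustatic_\eps \to \staticu$ in $L^1$, hence in $\spazio$; mollification preserves the mean. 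Then $W(\ustatic_\eps) \to W(\staticu)$ in $L^1$ by uniform integrability (the $\ustatic_\eps$ are equi-integrable and $W$ has the controlled growth \eqref{a1}), so $\limsup_\eps \Fe(\ustatic_\eps) = \int_\toro W(\staticu)\,dx \le \F(\ustatic) + \eta$, and a final diagonalization over $\eta$ closes the argument.

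\textbf{Main obstacle.} The $\liminf$ direction is routine once the Young measure is in hand. The delicate point is the recovery sequence in the first stage: one must produce an \emph{admissible} approximation (right mean, controlled $H^{-1}$-distance, and energy $\int W$ close to $\int W^{**}$) of an arbitrary $\staticu$ whose values may wander through $\gus$, using only fine oscillation between the endpoints of the components $\Sigma_i$. Getting the mean constraint exactly right while the $H^{-1}$-error stays small requires choosing the oscillation scale carefully (fine enough in $H^{-1}$, coarse enough that the local averages reproduce $\staticu$), and the growth condition \eqref{a1} together with assumption ii) (non-affinity, ensuring $W^{**}$ is genuinely the relevant envelope and the construction is not degenerate) is what makes this work.
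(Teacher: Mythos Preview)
Your argument is correct, but both halves are more elaborate than the paper's.

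For the $\Gamma$-$\liminf$, the paper simply notes that $\Fe \ge \F$ pointwise (drop the gradient term and use $W \ge W^{**}$) and that $\F$, being convex, is lower semicontinuous in $\spazio$; the inequality is then one line. Your Young-measure-plus-Jensen argument is a legitimate way to \emph{prove} that lower semicontinuity, but it is not needed if one is willing to quote it.

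For the recovery sequence, the paper first reduces by density to \emph{piecewise constant} functions with values in $\R\setminus\gus$ (using the same endpoint-oscillation idea as your Stage~1, but applied on intervals where the target is already constant), and then for such a function replaces each jump by a linear ramp of width $\delta_\eps$ with $\eps/\delta_\eps\to 0$; the energy error is trivially $O(\delta_\eps)+O(\eps^2/\delta_\eps)$ and the mass constraint is easy to enforce. Your mollification in Stage~2 achieves the same end but needs the extra care you flag under ``Main obstacle'': convolving a function with values outside $\gus$ can push values back into $\gus$, where $W>W^{**}$, so one must justify $\int W(v\ast\rho_r)\to\int W(v)$. This does follow from the global bound $W\le W^{**}+C$ (valid since $\gus$ is bounded) together with Jensen for $W^{**}$ under convolution, which gives a dominating sequence $W^{**}(v)\ast\rho_r + C$ converging in $L^1$; but the paper's piecewise-linear interpolation sidesteps the whole issue because the ramps sit on a set of vanishing measure. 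In short: your approach works and is more ``soft'', the paper's is more elementary and explicit.
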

\begin{proof}

The functional $\F$ is lower semicontinuous in $\spazio$.
Since $\Fe \ge \F$, if $\uestatic\to \ustatic$ in $\spazio$, 
then $\liminf_{\eps\downarrow 0} \Fe(\uestatic)\ge \F(\ustatic)$, 
namely the $\Gamma$-liminf inequality holds.

We now prove the $\Gamma$-limsup inequality: given $\staticu\in\spazio$ 
we have to find a sequence $(\uestatic) \subset \spazio$ with 
\begin{equation}\label{uno}
\uestatic \to \staticu \quad {\rm in}~ \spazio
\end{equation}
 such that
\begin{equation}\label{due}
\lim_{\eps \downarrow 0} \Fe(\uestatic)\to F(\ustatic) \quad{\rm as}~ 
\eps\downarrow 0.
\end{equation}
Assume first that $\ustatic$ is 
piecewise constant and takes values in 
$\R \setminus \gus$. Then, taking a piecewise linear function 
$\uestatic\in H^1(\toro)$ which coincides with $\ustatic$ 
out of a small $\delta_\eps$-neighbourhood of its jump set,  
where $\lim_{\eps \downarrow 0} \frac{\eps}{\delta_\eps}=0$,
and that keeps
the constraint $\int_{\toro} \uestatic ~dx= m$, one gets
\eqref{uno} and \eqref{due}.

It is now enough to show that the class of  functions $\ustatic$ 
considered above
is dense in $\spazio$ and with 
respect to $\F$, so that  the thesis will follow by a standard density argument.
Since 
piecewise constant functions are dense in $\spazio$, 
it is sufficient to show that a piecewise constant function $\ustatic$
can be approximated in $\spazio$ by piecewise constant functions $\ustatic_n$ taking values in $\R\setminus \gus$ and such that
\begin{equation}\label{ene}
\lim_{n\to +\infty} \F(\ustatic_n) =\F(\ustatic).
\end{equation}
Let $\ustatic$ be piecewise constant.
Let $A\subseteq \toro$ be an interval 
where $v$ takes value in $(a,b)$, with $(a,b)$
 a connected component of $\gus$.
Let $\lambda \in (0,1)$ be such that
$\ustatic=\lambda a + (1-\lambda)b$.
We can now take $\ustatic_n \in H^{-1}(A)$ 
such that $\ustatic_n\to \ustatic$ 
in $H^{-1}(A)$,
$\ustatic_n(x)\in\{a, b\}$ for any $x \in A$, and 
$\int_A \ustatic_n~dx = \int_A \ustatic~dx$. Then
$$
\F(\ustatic_n,A):= 
\int_A W^{**}(\ustatic_n)~dx =  
\lambda W^{**}(a) + (1-\lambda)W^{**}(b)
= \F(\ustatic,A) = 
\int_A W^{**}(\ustatic)~dx,
$$
since $W^{**}$ is linear on $[a,b]$. 
We can apply the same argument in the intervals where $\ustatic$
takes values in $\gus$,
while we keep $\ustatic_n=\ustatic$ in the rest of the domain. This concludes the proof.
\end{proof}

%%%%%%%%%%%%%%%%%%%%%%%%%%%%%%%%%%%%%%%%%%%%%%%%%%%%%%%%%%%%%%%%%%%%%%%%%%

\end{document}